\newtheorem{theorem}{Theorem}[section]
\newtheorem{proposition}[theorem]{Proposition}
\newtheorem{lemma}[theorem]{Lemma}
\newtheorem{remark}[theorem]{Remark}
\newtheorem{settings}[theorem]{Setting}
\newcommand{\E}{\mathbb{E}}
\newcommand{\R}{\mathbb{R}}
\newcommand{\N}{\mathbb{N}}
\newcommand{\Pro}{\mathbb{P}}
\newcommand{\X}{\mathbf{X}}
\newcommand{\x}{\mathbf{x}}
\newcommand{\Z}{\mathbf{Z}}
\newcommand{\W}{\mathbf{W}}
\newcommand{\Var}{\mathrm{Var}}
\numberwithin{equation}{section}
\title{Optimal Liquidation in a Mean-reverting Portfolio}
\author[1]{Riccardo Cesari\thanks{Department of Mathematics, Imperial College, London SW7 2BZ, UK. \href{riccardo.cesari17@imperial.ac.uk}{riccardo.cesari17@imperial.ac.uk} } \ and   Harry Zheng\thanks{Department of Mathematics, Imperial College, London SW7 2BZ, UK. \href{h.zheng@imperial.ac.uk}{h.zheng@imperial.ac.uk} }}
\begin{document}
\maketitle
\noindent\textbf{Abstract}. In this work we study a finite horizon optimal liquidation problem with multiplicative price impact in algorithmic trading, using market orders. We analyze the case when an agent is trading on a market with two financial assets, whose difference of log-prices is modelled with a mean-reverting process. The agent's task is to liquidate an initial position of shares of one of the two financial assets, without having the possibility of trading the other stock. The criterion to be optimized consists in maximising the expected final value of the agent, with a running inventory penalty.
The main result of this paper consists in finding a classical solution of the Hamilton-Jacobi-Bellman (HJB) equation associated to this problem, which is proved to not coincide with the value function. However, we find the value function as a solution to the forward-backward stochastic differential equation (FBSDE) associated to the problem. We provide numerical tests showing that the HJB and FBSDE solutions are close to each other and analysing performance of the described model. We also prove a verification theorem and a comparison principle for the viscosity solution to the HJB equation.\\

\noindent\textbf{Keywords}. Optimal liquidation, closed form solution, price impact, dynamic programming, viscosity solution, stochastic maximum principle, FBSDE approximation.

\noindent\textbf{AMS MSC2010}: 91G80, 93E20, 49J20, 49L20, 49L25.

\noindent\textbf{JEL Classification}: C6, G1.

\section{Introduction}
\label{sectionintro}
A standard service of investment banks is the execution of large trades. Unlike for small trades, the liquidation of a large portfolio is a complex task. It is usually impossible to immediately execute a large liquidation task or it is only possible at a high cost due to insufficient liquidity. Hence, the ability of exercising an order in a way that minimizes execution costs for the client is of primary importance. The objective of this paper is to determine the adaptive trading strategy that maximizes the expected final cash value of an asset sale. We address this question in the continuous-time liquidity model introduced by Almgren~\cite{Almgren2003} with an infinite time horizon and linear price impact. 

The optimal liquidation problem under price impact has been studied extensively in the literature. Bertsimas and Lo~\cite{Bertsimas1998} use a linear price impact model and solve a discrete optimal control problem to minimize expected trading costs. Almgren and Chriss~\cite{Almgren1999, Almgren2001}, Huberman and Stanzl~\cite{Huberman2005} introduce the volatility as a trading cost. Almgren~\cite{Almgren2003} employs nonlinear impact functions and discusses the continuous-time limit of the models in Almgren and Chriss~\cite{Almgren1999, Almgren2001} in more details. Almgren~\cite{Almgren2012} considers optimal liquidation in a market with stochastic liquidity and stochastic volatility. Kharroubi and Pham~\cite{Kharroubi} consider real trading that occurs in discrete time. Obizhaeva and Wang~\cite{Obizhaeva2013} include price impact by modelling the limit order book directly. For an overview of continuous-time price impact models, see Cartea et al.~\cite{Cartea} and the references therein.

All the literature we have inspected on the optimal liquidation strategy is based only on the stock that the agent needs to liquidate. However, there may be additional information available in the market, such as the price of a correlated stock, which could be helpful to better predict the stock price movements. A model based on both asset prices may generate a more reliable adaptive liquidation strategy, which not only relies on the price of the liquidating stock, but also on that of the correlated stock.

In this paper we analyze the case when an agent trades on a market with two financial assets whose difference of log-prices has a mean-reverting behavior. The agent's task is to liquidate the initial position of shares of one stock, without the possibility of trading the other stock. This technique is often employed when modeling a pair of stocks in pair trading in which the agent tries to make money out of a couple of correlated stocks by selling one stock and buying the other, to take advantage of the mean-reverting behavior of the co-integration factor between the two stocks. In our setting the agent can only sell stock, but cannot trade the other stock. Moreover, we define the difference of the log-prices to be an Ornstein-Uhlenbeck process which is the continuous-time analogue of the discrete-time AR(1) process and makes its parametrization an easy task, see Cartea et al.~\cite[Section 3.7]{Cartea} and Brockwell and Davis~\cite[Chapter 3]{Davis} for further details on parametrization of such processes.

The main contributions of this paper are that we prove the value function is the unique continuous viscosity solution to the HJB equation which is complicated with three state variables, that we find the representation of the classical solution under some mild conditions, which opens the way of finding the optimal value and strategy with the Monte-Carlo simu- lation, and that we show the value function and the optimal liquidation rate depend only on observable data which allow a straightforward calculation at each moment in time. Although the classical solution to the HJB equation is proved to be not coincident with the value function, numerical tests show that it is close to the value functionn, by proving that it is close to the approximated solution of the FBSDE associated to the optimization problem.

The rest of the paper is structured as follows. Section \ref{sectionmodel} describes the settings of the problem, defines the value function, writes the HJB equation and states the main theorem (Theorem \ref{thmviscsolgen1}) that the value function is the unique continuous viscosity solution to the HJB equation. Section \ref{sectionclass} finds a classical solution of the HJB equation under some mild conditions on the model parameters and derives the objective function as a sum of classical solutions to three different parabolic PDEs which can be solved one by one. Section \ref{sectionFBSDE} finds value function and optimal trading speed as solution to an FBSDE obtained by applying stochastic maximum principle to our problem. Section \ref{sectionnumeric} is the numerical section and it is divided in two parts. Subsection \ref{subsectionNN} compares the closed form solution obtained in section \ref{sectionclass} with the solution of the FBSDE in section \ref{sectionFBSDE}, which is approximated using a deep learning algorithm. Subsection \ref{subsectioncloseform} provides some numerical tests to assess our model and compares its performance with that of two other strategies based on two geometric Brownian motion approximations of the liquidating stock price. Section \ref{sectionconclusion} concludes. Appendix \ref{sectionproof} contains the proofs of Theorem \ref{thmviscsolgen1}, Propositions \ref{propassu} and \ref{remarkonverif}.

\section{Model}
\label{sectionmodel}
Let $(\Omega, \mathcal{F}, (\mathcal{F}_t)_{t\in [0,T]}, \Pro)$ be a filtered probability space, where $(\mathcal{F}_t)_{t\in [0,T]}$ is the natural filtration generated by two independent standard Brownian motions $W^1$ and $W^2$, augmented by all $\Pro$-null sets. Let $T$ be the fixed terminal time, $(A_r)_{r\in[0,T]}$ the price of a stock in the market, satisfying the following geometric Brownian motion (GBM):
\begin{equation}
\label{Awritten}
dA_r = \mu_1 A_r dr+\sigma_1 A_r dW^1_{r}, \quad A_0=a,
\end{equation}
where $\mu_1$, $\sigma_1$ are positive constants, $\mu_1$ is the growth rate, $\sigma_1$ the volatility rate, $(S_r)_{r\in[0,T]}$ the price of the stock that the agent aims to liquidate, $(\varepsilon_r)_{r\in[0,T]}$ the cointegration factor between stocks $S_r$ and $A_r$, defined by $\varepsilon_r = \ln\left(\frac{S_r}{A_r}\right)$, and follows an Ornstein-Uhlenbeck (OU) process
\begin{equation}
\label{epsilonwritten}
d\varepsilon_r =-k\varepsilon_r dr+\sigma_2 \left(\rho dW^1_r+\sqrt{1-\rho^2} dW^2_r\right), \quad \varepsilon_0=\epsilon,
\end{equation}
where $k$, $\sigma_2$ are positive constants, $\rho\in [-1,1]$ the correlation coefficient, $k$ the mean reversion speed, $\sigma_2$ the volatility. The cointegration factor $\varepsilon_r$ behaves as a mean-reverting process, which implies a period of time in which the process $S_r$ outperforms (or underperform) $A_r$ is followed by a moment in which the two stocks have similar prices.

Let $(c_r)_{r\in[0,T]}$ denote the rate of selling the stock, which is a decision (control) variable decided by the agent and is said admissible if it is a progressively measurable, non-negative and square integrable process. Denote by $\mathcal{A}$ the set of all admissible control processes.

Let $(Q_r)_{r\in[0,T]}$ denote the inventory left at time $r$ and $q_0>0$ the initial amount of stock owned by the agent. The process $Q_r$ depends on the trading strategy $c$ and follows the equation:
\begin{equation}
\label{qwritten}
dQ_r=-c_r  dr, \quad  Q_0=q_0.
\end{equation}
Let $(M_r)_{r\in[0,T]}$ denote the wealth process, satisfying the following equation:
\begin{equation*}
dM_r= c_r(S_r-\eta c_r)dr, \quad M_0=0,
\end{equation*}
where $\eta\ge 0$ is the temporary price impact factor, which is the same as that in Cartea et al.~\cite{Cartea}.

Denote by $\x$ the vector of three state variables $(a,\epsilon,q)$ and $\mathcal{O}$ the state space, given by $\mathcal{O}:=(0,\infty)\times \R \times [0,\bar{Q}_0)$ with $q_0<\bar{Q}_0<\infty$. Moreover, denote the initial price of the stock $S$ by $s:=ae^{\epsilon}$. We group the three state processes $(A,\varepsilon,Q)$ into a vector $\X$. Let $t\in[0,T]$, we define the 3-dimensional stochastic process $(\X_r)_{r\in[t,T]}:=\left(A_r, \varepsilon_r, Q_r \right)_{r\in[t,T]}$ as the solution to the following SDE
\begin{equation}
\label{SDEdef}
d\X_r=\mu(\X_r,c(r,\X_r))dt+\sigma(\X_r)d\mathbf{W}_r,
\end{equation}
where
\begin{align}
\label{defmusigma}
\mu(\x,c)=\begin{pmatrix}
\mu_1\\
-k \epsilon\\
-c
\end{pmatrix}, \quad \sigma(\x)= \begin{pmatrix}
\sigma_1 a&0\\
\sigma_2 \rho&\sigma_2 \sqrt{1-\rho^2}\\
0&0
\end{pmatrix}.
\end{align}

The optimal liquidation problem is defined by:
\begin{equation}
\label{eqmax}
\sup_{c\in \mathcal{A}}\mathbb{E}\bigg[ M_\tau+ Q_\tau \left(S_\tau-\chi Q_\tau \right) -\phi_1 \int_0^\tau Q_r^2 \;dr-\phi_2 \int_0^\tau S_r Q_r \;dr-\phi_3 \int_0^\tau A_r Q_r \;dr \bigg],
\end{equation}
where $\tau$ is a stopping time defined by
$\tau=T\wedge \min\{r\ge 0\; | \; Q_r=0\}$, the first time when all stock is liquidated before terminal time $T$ or $T$ otherwise. The first term inside expectation is the wealth value at $\tau$, the second the terminal liquidation value and the last three the running inventory penalties. The terminal liquidation value is the cash from liquidating all the inventory left at terminal time $T$ at a price $S_T$ penalized by a quantity proportional to the amount of remaining stocks. Inventory penalties are not financial costs, but incorporate the agent's urgency for executing the trade. Denote by $\E_t[\cdot]=\E[\cdot | A_t=a,\ \varepsilon_t=\epsilon, \ Q_t=q]$, the conditional expectation operator at time $t\in[0,T]$.

The value function of problem $\eqref{eqmax}$ is defined by
\begin{equation}
\label{1valfunc}
v(t,a,\epsilon,q)=\sup_{c\in \mathcal{A}} v^{c}(t,a,\epsilon,q),
\end{equation}
where
\begin{align}
\label{1eqq15}
v^{c}(t,a,\epsilon,q)&= \mathbb{E}_t\bigg[ M_\tau+ Q_\tau \left(S_\tau-\chi Q_\tau \right) -\phi_1 \int_t^\tau Q_r^2 \;dr-\phi_2 \int_t^\tau S_r Q_r \;dr-\phi_3 \int_t^\tau A_r Q_r \;dr \bigg],
\end{align}
where $\tau$ is defined by $\tau=T\wedge \min\{r\ge t\; | \; Q_r=0\}$.

To solve the control problem $\eqref{1valfunc}$, we adopt the dynamic programming principle and derive the following HJB equation for the value function:
\begin{equation}
\label{1eqq3tris3}
\frac{\partial w}{\partial t} +\mathcal{L} w +\sup_{c\ge 0} \left[ -c\frac{\partial w}{\partial q}+ae^{\epsilon} c-\eta c^2 \right]-\phi_1 q^2 -\phi_2q ae^{\epsilon}-\phi_3 qa =0
\end{equation}
on $[0,T)\times \mathcal{O}$, with terminal condition $w(T,a,\epsilon,q)= q\left(ae^{\epsilon}-\chi q \right)$ and boundary condition $w(t,a,\epsilon,0)=0$, where $\mathcal{L}$ is the operator defined by
\begin{equation*}
\mathcal{L} w= \frac{\sigma_1^2}2 a^2 \frac{\partial^2 w}{\partial a^2}+\rho\sigma_1\sigma_2 a \frac{\partial^2 w}{\partial a \partial \epsilon}+\frac{\sigma_2^2}2 \frac{\partial^2 w}{\partial \epsilon^2}+\mu_1 a \frac{\partial w}{\partial a} -k \epsilon \frac{\partial w}{\partial \epsilon}.
\end{equation*}

\begin{theorem}[Verification Theorem]
\label{vertheo}
Let $w$ be a function in $C^{1,2}([0,T)\times \mathcal{O})\cap C^0([0,T]\times \bar{\mathcal{O}})$ and satisfy the following growth condition
$$|w(t,\x)|\le C(1+q^2)(1+a^{p_1})(1+e^{p_2\epsilon}) \qquad \forall (t,\x)\in [0,T]\times \mathcal{O}$$
for fixed $p_1,p_2,C>0$. Assume there exists a measurable function $c^*(t,\x)$ such that 
\begin{align*}
&\frac{\partial w}{\partial t} +\mathcal{L} w +\sup_{c\ge 0} \left[ -c\frac{\partial w}{\partial q}+ae^{\epsilon} c-\eta c^2 \right]-\phi_1 q^2 -\phi_2q ae^{\epsilon}-\phi_3 qa \\
&\quad= \frac{\partial w}{\partial t} +\mathcal{L} w -c^*\frac{\partial w}{\partial q}+ae^{\epsilon} c^*-\eta (c^*)^2 -\phi_1 q^2 -\phi_2q ae^{\epsilon}-\phi_3 qa=0
\end{align*}
with terminal condition $w(T,a,\epsilon,q)= q\left(ae^{\epsilon}-\chi q \right)$ and boundary condition $w(t,a,\epsilon,0)=0$. Let the SDE
\begin{equation}
\label{SDEver}
d\X_r=\mu(\X_r,c^*(r,\X_r))dt+\sigma(\X_r)d\mathbf{W}_r
\end{equation}
admit a unique solution, given an initial condition $\X_t=\x$, where $\mu$ and $\sigma$ are defined in $\eqref{defmusigma}$. Let $(c^*(r,\X_r))_{r\in[t,T]}\in \mathcal{A}$. Then $w$ coincides with the value function $v$.
\end{theorem}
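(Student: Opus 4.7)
The plan is to follow the classical Fleming--Soner verification argument: apply It\^o's formula to $w(r,\X_r)$ along an arbitrary admissible control, use the HJB inequality to obtain a one-sided bound, and then realize equality along the candidate optimizer $c^*$. First I would fix $(t,\x)\in[0,T]\times\mathcal{O}$ and an admissible $c\in\mathcal{A}$, let $\X^c$ denote the corresponding state process started at $\x$ at time $t$, and define the localizing sequence $\tau_n=\tau\wedge\inf\{r\ge t:A_r+|\varepsilon_r|\ge n\}$. Because $w\in C^{1,2}([0,T)\times\mathcal{O})$ and the coefficients of the SDE are locally Lipschitz, It\^o's formula applied on $[t,\tau_n]$ yields
\begin{equation*}
w(\tau_n,\X^c_{\tau_n})=w(t,\x)+\int_t^{\tau_n}\Bigl(\tfrac{\partial w}{\partial t}+\mathcal{L}w-c_r\tfrac{\partial w}{\partial q}\Bigr)(r,\X^c_r)\,dr+\int_t^{\tau_n}(\nabla w\,\sigma)(r,\X^c_r)\,d\W_r,
\end{equation*}
and the stochastic integral is a true martingale once stopped at $\tau_n$.

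Next I would insert the HJB equation. For the arbitrary control, the supremum term dominates $-c_r w_q+A_r e^{\varepsilon_r}c_r-\eta c_r^2$ pointwise, so the drift above is bounded below by $-\bigl[c_r(S_r-\eta c_r)-\phi_1 Q_r^2-\phi_2 S_r Q_r-\phi_3 A_r Q_r\bigr]$. Taking expectations and rearranging gives
\begin{equation*}
w(t,\x)\ge \E_t\!\left[w(\tau_n,\X^c_{\tau_n})+\int_t^{\tau_n}\!\!\bigl(c_r(S_r-\eta c_r)-\phi_1 Q_r^2-\phi_2 S_r Q_r-\phi_3 A_r Q_r\bigr)dr\right]\!.
\end{equation*}
Along $c^*$ equality holds in the HJB equation by assumption, producing the matching reverse inequality.

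The main analytic step is passing $n\to\infty$. Here I would use the polynomial growth bound on $w$ together with the fact that $Q_r\in[0,q_0]$ for every admissible control (since $c\ge 0$ and $Q$ is non-increasing), so the $(1+q^2)$ factor is bounded by a constant. Standard moment estimates for geometric Brownian motion give $\E[\sup_{r\in[t,T]}A_r^{p}]<\infty$ for every $p$, and Gaussianity of the OU process gives $\E[\sup_{r\in[t,T]}e^{p\varepsilon_r}]<\infty$; combined with square-integrability of $c$ these yield uniform integrability of $\{w(\tau_n,\X^c_{\tau_n})\}_n$ and of the running cost integrals. Continuity of $w$ on $[0,T]\times\bar{\mathcal{O}}$ together with the explicit terminal and boundary conditions then identifies the limit: on $\{\tau=T\}$ the terminal condition gives $w(T,\X^c_T)=Q_T(S_T-\chi Q_T)$, while on $\{\tau<T\}$ one has $Q_\tau=0$ and the boundary condition gives $w(\tau,\X^c_\tau)=0=Q_\tau(S_\tau-\chi Q_\tau)$. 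Dominated convergence therefore yields $w(t,\x)\ge v^c(t,\x)$ for every $c\in\mathcal{A}$, and equality for $c^*$.

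The main obstacle I expect is justifying the uniform integrability when the polynomial/exponential growth in $a$ and $\varepsilon$ is combined with the stopping time $\tau_n$; this is really a matter of carefully applying Burkholder--Davis--Gundy to the stochastic integral and the $L^p$-moment estimates for GBM and OU, and then invoking the square-integrability of $c^*(r,\X^*_r)$ (which is given by the admissibility hypothesis in the last line of the statement). Once these integrability facts are in place, the rest of the argument is the standard dynamic-programming comparison, so concluding $w\equiv v$ on $[0,T]\times\mathcal{O}$ follows by taking the supremum over $c\in\mathcal{A}$ and then using the equality case with $c^*$.
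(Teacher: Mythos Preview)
Your proposal is correct and follows essentially the same verification-theorem argument as the paper: apply It\^o to $w$ up to a localizing stopping time capped at $\tau$, use the HJB inequality for arbitrary $c$ and equality for $c^*$, and pass to the limit via dominated convergence using the growth bound on $w$, boundedness of $Q$, and the GBM/OU moment estimates. The only cosmetic difference is your choice of localization (exit from a compact set in $(A,\varepsilon)$) versus the paper's (bounding the quadratic variation of the stochastic integral), which is immaterial.
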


Equation $\eqref{1eqq3tris3}$ is a nonlinear PDE with three state variables $a,\epsilon$ and $q$. We show the value function is a viscosity solution of $\eqref{1eqq3tris3}$, see Pham~\cite{Pham} for its definition and properties.
\begin{theorem}
\label{thmviscsolgen1}
The value function $v$ defined in $\eqref{1valfunc}$ is the unique viscosity solution of the HJB equation $\eqref{1eqq3tris3}$.
\end{theorem}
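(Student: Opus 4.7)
\textbf{Proof proposal for Theorem \ref{thmviscsolgen1}.} The plan is to follow the standard three-step viscosity-solution program: (i) establish the dynamic programming principle (DPP) and basic regularity/growth of $v$; (ii) deduce from the DPP that $v$ is both a viscosity subsolution and supersolution of \eqref{1eqq3tris3} with the prescribed terminal and boundary data; (iii) prove a comparison principle in the class of functions obeying the relevant growth condition, which immediately forces uniqueness.

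First I would check the preliminary estimates. Using \eqref{Awritten}--\eqref{epsilonwritten} and Gronwall/BDG arguments, I obtain moment bounds for $A_r^{p_1}e^{p_2\varepsilon_r}$ on $[t,T]$ that depend polynomially on $a$ and exponentially on $\epsilon$. Combined with $0\le Q_r\le q$ and the square-integrability of $c\in\mathcal{A}$, this yields a bound of the form $|v(t,\x)|\le C(1+q^2)(1+a^{p_1})(1+e^{p_2\epsilon})$, matching the growth appearing in the verification Theorem~\ref{vertheo}. Continuity of $v$ in $(t,\x)$ would be established by a standard flow-continuity argument for \eqref{SDEdef}: for two initial data $\x,\x'$, I estimate the difference of the controlled trajectories in $L^2$, transfer this to the integrand in \eqref{1eqq15} via the Lipschitz/polynomial growth of the integrand, and conclude local uniform continuity by taking the sup over $c\in\mathcal{A}$. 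The boundary relation $v(t,a,\epsilon,0)=0$ follows because $\tau=t$ when $q=0$, and the terminal condition $v(T,a,\epsilon,q)=q(ae^\epsilon-\chi q)$ is the obvious limit.

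Next I would derive the DPP in its classical form: for every stopping time $\theta\in[t,\tau]$,
\begin{equation*}
v(t,\x)=\sup_{c\in\mathcal{A}}\E_t\!\left[\int_t^{\theta}\!\!\bigl(-\phi_1 Q_r^2-\phi_2 S_r Q_r-\phi_3 A_r Q_r+c_r(S_r-\eta c_r)\bigr)\,dr+v(\theta,\X_\theta)\right].
\end{equation*}
Note that $M_\tau-M_t$ has been absorbed as a running integral, so the driver inside is truly a running reward. With the DPP in hand, the subsolution/supersolution properties are obtained in the textbook way (Pham~\cite{Pham}). For a smooth test function $\varphi$ touching $v$ from above at an interior point $(t_0,\x_0)$, I pick the constant control $c\equiv c^*\ge 0$ on a small stochastic interval, apply It\^o to $\varphi(\cdot,\X_\cdot)$, divide by the interval length and pass to the limit to obtain $\partial_t\varphi+\mathcal{L}\varphi-c^*\partial_q\varphi+ae^\epsilon c^*-\eta (c^*)^2-\phi_1 q^2-\phi_2 qae^\epsilon-\phi_3 qa\le 0$; taking the sup over $c^*\ge 0$ gives the subsolution inequality. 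The supersolution inequality follows symmetrically from an $\varepsilon$-optimal control and the reverse side of the DPP.

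The main obstacle is the comparison principle, because the state space $\mathcal{O}=(0,\infty)\times\R\times[0,\bar{Q}_0)$ is unbounded in $a$ and $\epsilon$ and because the admissible growth mixes polynomial in $a$ with exponential in $\epsilon$ and quadratic in $q$. My plan is to flatten the geometry by the change of variables $a=e^x$, which turns the $a$-operator into one with constant coefficients, then to handle the growth by subtracting a well-chosen auxiliary function $\psi(t,\x)=e^{\lambda(T-t)}(1+q^2)(1+e^{p_1 x})(1+e^{p_2\epsilon})$ with $\lambda$ large enough that $\psi$ is a strict classical supersolution of the linearized HJB. If $u$ is a viscosity subsolution and $w$ a viscosity supersolution of \eqref{1eqq3tris3}, I would show by a doubling-variables argument (with the penalty $|a-a'|^2/\alpha+|\epsilon-\epsilon'|^2/\alpha+|q-q'|^2/\alpha$ plus a small multiple of $\psi$ to confine the maximum) that $\sup(u-w-\delta\psi)\le 0$ for every $\delta>0$; sending $\delta\downarrow 0$ yields $u\le w$. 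The delicate point is controlling the nonlinear Hamiltonian $\sup_{c\ge 0}[-c\partial_q w+ae^\epsilon c-\eta c^2]=(ae^\epsilon-\partial_q w)_+^2/(4\eta)$, whose explicit quadratic form allows an estimate linear in the difference of $q$-derivatives of the test functions produced by the penalty, which in turn is absorbed by the $\psi$-term. Uniqueness then follows by symmetry, and combined with the sub/supersolution property proved above, $v$ is the unique continuous viscosity solution in the admissible growth class.
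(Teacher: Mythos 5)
Your overall architecture (viscosity property via the DPP plus a comparison principle in a growth class) is the same as the paper's, but there are two concrete gaps. First, you assert continuity of $v$ by ``a standard flow-continuity argument \dots taking the sup over $c\in\mathcal{A}$''. This is exactly the step the paper cannot carry out over the full class $\mathcal{A}$: continuity is only proved (Proposition \ref{propAtilde}) after restricting to controls with an $L^{2+\gamma}$ bound, namely $\tilde{\mathcal{A}}_{\gamma,N}$. The obstruction is the temporary-impact term: comparing trajectories started from $q$ and $q'$ produces the error $\eta\,\E_t\bigl[\int_{\tau'}^{\tau}c_r^2\,dr\bigr]$ between the two liquidation times, and with merely square-integrable controls this cannot be made small as $q'\to q$, since the control may concentrate its $L^2$ mass on $[\tau',\tau]$; the paper controls it by H\"older using the extra $\gamma$ of integrability. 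Because of this, the paper establishes the viscosity property in the discontinuous (semicontinuous-envelope) framework, invoking Pham's Propositions 4.3.1--4.3.2, and states the comparison lemma for upper semicontinuous subsolutions against lower semicontinuous supersolutions. You should either do the same or restrict the admissible class; as written, your step (i) is unproved and your step (ii), which touches a continuous $v$ with smooth test functions, inherits the problem.

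Second, in the comparison principle your confining function $\psi=e^{\lambda(T-t)}(1+q^2)(1+e^{p_1x})(1+e^{p_2\epsilon})$ only dominates $u-w$ at infinity in $a$ and in the direction $\epsilon\to+\infty$ (the growth class is in $e^{\kappa|\epsilon|}$, so you also need an $e^{-p_2\epsilon}$ term), and it does nothing near the boundary portions $a\to0^+$ and $q\to\bar{Q}_0^-$ of $\mathcal{O}=(0,\infty)\times\R\times[0,\bar{Q}_0)$, where the equation is not available and the supremum of $u-w-\delta\psi$ may fail to be attained inside the domain. The paper's auxiliary function contains the additional singular terms $\frac{1}{a^2}$ and $-\ln\bigl((\bar{Q}_0-q)/(\bar{Q}_0+1)\bigr)$ precisely so that it blows up on those boundary pieces and forces the maximum into a compact subset of the interior or onto $q=0$, where the boundary data are compared. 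Your change of variables $a=e^x$ pushes $a=0$ to $x=-\infty$, but since $u-w$ need not decay there the penalization still does not localize the maximum. This is fixable by adding such singular terms, but as proposed the doubling-of-variables step has nowhere to stand. Beyond these two points, your treatment of the Hamiltonian $(ae^\epsilon-\partial_q w)_+^2/(4\eta)$ and the use of the $\lambda$- (respectively $\beta$-) shift to absorb the linearization error match the paper's strategy.
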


If we strengthen the condition on the control set, we have continuity of the value function. Let $(t,\x)\in [0,T]\times \mathcal{O}$ be fixed and let $\gamma,N>0$. Then, we define the set $\tilde{\mathcal{A}}_{\gamma,N}(t,\x)$ as
\begin{equation}
\label{defAtilde}
\tilde{\mathcal{A}}_{\gamma,N}(t,\x) = \bigg\{c\in\mathcal{A}(t,\x) \bigg| \ \left(\E\left[\int_t^T c_r^{2+\gamma} \; dr\right]\right)^{\frac 1{2+\gamma}}\le N(1+a)\left(1+e^{N \epsilon}\right)\bigg\}.
\end{equation}
\begin{proposition}
\label{propAtilde}
Let the set of admissible controls be reduced to $\tilde{\mathcal{A}}_{\gamma,N}$ for fixed $\gamma,N>0$. Then the value function $v$, defined in $\eqref{1valfunc}$, is continuous on $[0,T]\times \mathcal{O}$.
\end{proposition}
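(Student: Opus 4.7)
The plan is to prove continuity of $v$ at an arbitrary $(t_0,\x_0)\in[0,T]\times\mathcal{O}$ by separately establishing lower and upper semi-continuity along any sequence $(t_n,\x_n)\to(t_0,\x_0)$, exploiting the uniform $L^{2+\gamma}$-bound on controls built into $\tilde{\mathcal{A}}_{\gamma,N}$. As a preliminary reduction, since $Q_r\equiv 0$ for $r\ge\tau$ and extending any admissible $c$ by $0$ on $[\tau,T]$ preserves both admissibility and every integrand in \eqref{1eqq15}, one may replace $\tau$ by $T$ throughout and work on the fixed horizon $[t,T]$; this removes the control-dependent stopping time from the analysis. The analytic backbone is provided by the Lipschitz-after-$\log$ structure of the coefficients in \eqref{defmusigma}, which yields the moment bounds $\E[\sup_r A_r^p]\le C_p(1+a^p)$ and $\E[\sup_r e^{p\varepsilon_r}]\le C_p(1+e^{C_p\epsilon})$ together with the standard flow-continuity $\E[\sup_r|A_r^{t_n,a_n}-A_r^{t_0,a_0}|^p+|\varepsilon_r^{t_n,\epsilon_n}-\varepsilon_r^{t_0,\epsilon_0}|^p]\to 0$ for every $p\ge 1$. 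Combined with the $L^{2+\gamma}$-bound on controls and H\"older's inequality, these estimates make $M_\tau$, $Q_\tau(S_\tau-\chi Q_\tau)$ and the three running penalty terms uniformly integrable along $(t_n,\x_n)$.

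For lower semi-continuity, fix $\delta>0$, pick $c^{(0)}\in\tilde{\mathcal{A}}_{\gamma,N}(t_0,\x_0)$ that is $\delta$-optimal (with an arbitrarily small safety margin in the norm bound), and construct $c^{(n)}\in\tilde{\mathcal{A}}_{\gamma,N}(t_n,\x_n)$ for $n$ large by restricting or zero-extending $c^{(0)}$ to $[t_n,T]$; admissibility at $(t_n,\x_n)$ holds since the right-hand side $N(1+a_n)(1+e^{N\epsilon_n})$ converges to $N(1+a_0)(1+e^{N\epsilon_0})$. The strong SDE stability together with the above uniform integrability then gives $v^{c^{(n)}}(t_n,\x_n)\to v^{c^{(0)}}(t_0,\x_0)\ge v(t_0,\x_0)-\delta$ by dominated convergence, whence $\liminf_n v(t_n,\x_n)\ge v(t_0,\x_0)$ after sending $\delta\downarrow 0$.

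For upper semi-continuity, pick $\delta$-optimal $c^{(n)}\in\tilde{\mathcal{A}}_{\gamma,N}(t_n,\x_n)$; the uniform $L^{2+\gamma}$-bound gives weak relative compactness in $L^{2+\gamma}(\Omega\times[0,T])$, so along a subsequence $c^{(n)}\rightharpoonup c^{(\infty)}\in\tilde{\mathcal{A}}_{\gamma,N}(t_0,\x_0)$, where progressive measurability and the norm bound of $c^{(\infty)}$ follow from Mazur's lemma applied to convex combinations that converge strongly, hence a.s.\ along a further subsequence. Because $Q^{(n)}$ is the integral of $c^{(n)}$ and is bounded by $\bar{Q}_0$, one obtains $Q^{(n)}\to Q^{(\infty)}$ strongly in $L^p$ uniformly in $r$; combined with the strong convergence of $A^{(n)},\varepsilon^{(n)}$ all running and terminal reward terms pass to the limit, the cross-term $\E\int c^{(n)}_r S^{(n)}_r dr\to \E\int c^{(\infty)}_r S^{(\infty)}_r dr$ by weak-strong duality with $S^{(n)}\to S^{(\infty)}$ strongly in $L^{(2+\gamma)/(1+\gamma)}$. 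The only one-sided term is the quadratic impact cost, but $c\mapsto\E\int c^2 dr$ is convex and hence weakly lower semi-continuous, so $-\eta\E\int(c^{(\infty)})^2 dr\ge\limsup_n(-\eta\E\int(c^{(n)})^2 dr)$, yielding $\limsup_n v^{c^{(n)}}(t_n,\x_n)\le v^{c^{(\infty)}}(t_0,\x_0)\le v(t_0,\x_0)$.

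The main obstacle is this weak/strong interplay: ensuring simultaneously that the weak limit $c^{(\infty)}$ remains progressively measurable and admissible at the limiting initial condition, that the quadratic cost passes in the favourable direction, and that the linear-in-$c$ rewards survive weak convergence. All three are handled by Mazur's lemma together with the strong convergence of $S^{(n)},A^{(n)},Q^{(n)}$ coming from the linear (integral) dependence of $Q$ on $c$ and the continuity of the bound $N(1+a)(1+e^{N\epsilon})$ in $(a,\epsilon)$; everything else is routine SDE estimation based on the moment bounds above.
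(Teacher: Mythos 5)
Your overall strategy (lower semicontinuity by transplanting near-optimal controls, upper semicontinuity by weak $L^{2+\gamma}$ compactness plus Mazur's lemma) is genuinely different from the paper's, which instead proves a quantitative modulus of continuity in $\x$, uniform in $t$ (Lemma \ref{lemmalocbound}), by comparing $v^c(t,\x)$ and $v^c(t,\x')$ for the \emph{same} control and then handles continuity in $t$ via the dynamic programming principle. However, two steps of your argument do not hold as stated. First, weak convergence $c^{(n)}\rightharpoonup c^{(\infty)}$ in $L^{2+\gamma}(\Omega\times[0,T])$ does \emph{not} imply strong $L^p$ convergence of $Q^{(n)}_r=q_n-\int_{t_n}^r c^{(n)}_u\,du$: the integration is only in time, not in $\omega$ (take $c^{(n)}_u(\omega)=\xi_n(\omega)$ with $\xi_n$ converging weakly but not strongly in $L^2(\Omega)$; then $Q^{(n)}_r$ converges only weakly). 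Your passage to the limit of the terminal and running terms is therefore unjustified as written. It can be repaired --- the terms linear in $Q$ pass by weak--strong duality against the strongly convergent $A^{(n)}, S^{(n)}$, and the terms $-\chi\E[Q_T^2]$ and $-\phi_1\E\int Q_r^2\,dr$ are concave in $c$, hence weakly upper semicontinuous, which is the favourable direction --- but that is a different argument from the one you give.

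Second, and more importantly, the claim that passing to the fixed horizon ``removes the control-dependent stopping time from the analysis'' hides the crux of the problem. Your reduction amounts to imposing the a.s.\ constraint $\int_t^T c_r\,dr\le q$, which depends on the initial inventory. In the lower-semicontinuity step a near-optimal $c^{(0)}$ feasible for $q_0$ is in general infeasible for $q_n<q_0$; you must truncate it at the random time its cumulative mass reaches $q_n$, and the resulting error contains $\eta\,\E\bigl[\int_{\tau_n}^{\tau_0}(c^{(0)}_r)^2\,dr\bigr]$ over a random interval carrying control mass $|q_0-q_n|$. Bounding this requires the H\"older interpolation between $L^1$ and $L^{2+\gamma}$, which yields $|q_0-q_n|^{\gamma/(\gamma+1)}\,\|c^{(0)}\|_{2+\gamma}^{(2+\gamma)/(1+\gamma)}$ --- precisely the paper's key estimate in Lemma \ref{lemmalocbound}, and the \emph{only} place where the exponent $2+\gamma$ (rather than $2$), hence the restriction to $\tilde{\mathcal{A}}_{\gamma,N}$, is actually used. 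Your proposal never invokes $\gamma$ in an essential way, which signals that the decisive difficulty has been bypassed rather than solved. (In the upper-semicontinuity step you also need the weak limit to satisfy $\int_{t_0}^T c^{(\infty)}_r\,dr\le q_0$ a.s., so that its fixed-horizon payoff is dominated by $v(t_0,\x_0)$; this does hold because that constraint set is convex and strongly closed, but it must be checked, since a limit control that drives $Q$ negative could otherwise collect the positive penalty terms $-\phi_2\int S_rQ_r\,dr$ and $-\phi_3\int A_rQ_r\,dr$.)
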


\section{Classical solution to HJB equation $\eqref{1eqq3tris3}$}
\label{sectionclass}
It is in general difficult to find a classical solution of equation $\eqref{1eqq3tris3}$. We show in this section that, under some mild conditions, we can achieve that. From equation $\eqref{1eqq3tris3}$ we get the optimal rate of trading as
\begin{equation}
\label{eqq4}
c^*(t,a,\epsilon,q)=\frac 1{2\eta} \max \left\{ae^{\epsilon}-\frac{\partial w}{\partial q}, \ 0\right\}.
\end{equation}
Substituting $c^*$ in equation $\eqref{1eqq3tris3}$, we have
\begin{equation}
\label{eqq3bis}
\frac{\partial w}{\partial t} + \mathcal{L} w -\phi_1 q^2-\phi_2 q ae^{\epsilon}-\phi_3 qa+\frac{1}{4\eta} \left(\max\Big\{ae^{\epsilon}-\frac{\partial w}{\partial q} , \ 0\Big\}\right)^2=0.
\end{equation}
The PDE $\eqref{eqq3bis}$ is nonlinear and difficult to solve. To simplify it we try to eliminate the last term containing the $\max$ operator. In the following, we assume that the function $w$ satisfies $ae^{\epsilon}-\frac{\partial w}{\partial q} \ge 0$ for any $(t,\x)\in [0,T]\times\mathcal{O}$.
Under this assumption, $\eqref{eqq3bis}$ reduces to
\begin{equation}
\label{eqq2}
\frac{\partial w}{\partial t} +\mathcal{L} w -\phi_1 q^2-\phi_2 qae^{\epsilon}-\phi_3 qa+\frac{1}{4\eta} \left(ae^{\epsilon}-\frac{\partial w}{\partial q}\right)^2=0.
\end{equation}
By inspecting the terminal conditions, we postulate a solution of the following form:
\begin{equation}
\label{eqq6}
w(t,a,\epsilon,q)=g_1(t,\epsilon,a)\mathds{1}_{\{q>0\}} +qae^{\epsilon} g_2(t,\epsilon)+q^2 g_3(t).
\end{equation}
Substituting $\eqref{eqq6}$ to equation $\eqref{eqq2}$, collecting terms with coefficients $1, \ sq$ and $q^2$, and setting each term equal to $0$, we derive the following system of PDEs on $[0,T]\times (\mathcal{O}\cap \{q>0\})$:
\begin{equation}
\label{HJBsplitted}
\begin{cases}
0&=\frac{\partial g_1}{\partial t} + \mathcal{L} g_1+\frac{1}{4\eta} a^2e^{2\epsilon}\left(1-g_2\right)^2,\\

0&=\frac{\partial g_2}{\partial t} +\frac{\sigma_2^2}2 \frac{\partial^2 g_2}{\partial \epsilon^2}+\left(\sigma^2_2+\rho\sigma_1\sigma_2-k \epsilon\right) \frac{\partial g_2}{\partial \epsilon} +\left( \frac{\sigma_2^2}2+\rho\sigma_1\sigma_2+\mu_1 -k \epsilon+\frac {g_3}{\eta}\right) g_2 -\phi_2-\phi_3 e^{-\epsilon}- \frac {g_3}{\eta},\\

0&=g_3 ' -\phi_1 +\frac{g_3^2}{\eta},
\end{cases}
\end{equation}
with terminal conditions $g_1(T,a,\epsilon)=0, \ g_2(T,\epsilon)=1, \ g_3(T)=- \chi$.

The last equation in $\eqref{HJBsplitted}$ is a Riccati type equation and has a closed form solution given by
\begin{equation}
\label{g3}
g_3(t)=\sqrt{\phi_1 \eta} \frac{e^{2t\sqrt{\frac{\phi_1}{\eta}}}(\sqrt{\phi_1 \eta}- \chi)-e^{2T\sqrt{\frac{\phi_1} {\eta}}}(\sqrt{\phi_1 \eta}+ \chi)}{e^{2t\sqrt{\frac{\phi_1}{\eta}}}(\sqrt{\phi_1 \eta}- \chi)+e^{2T\sqrt{\frac{\phi_1}{\eta}}}(\sqrt{\phi_1 \eta}+\chi)} , \qquad \forall t\in[0,T].
\end{equation}
It is easy to  verify that $g_3$ is a negative and increasing function.

Recall that function $w$ must satisfy $ae^{\epsilon}-\frac{\partial w}{\partial q} \ge 0$ for any $(t,\x)\in [0,T]\times\mathcal{O}$, which is equivalent to the following:
\begin{equation}
\label{eqqfirstcond}
ae^{\epsilon}\left( 1-g_2(t,\epsilon )\right)-2qg_3(t)\ge 0, \quad \forall (t,a,\epsilon,q)\in[0,T]\times \mathcal{O}.
\end{equation}
Since $a$ is positive and $g_3$ is negative, condition $\eqref{eqqfirstcond}$ holds if
\begin{equation}
\label{condong2}
g_2(t,\epsilon)\le 1, \quad \forall (t,\epsilon)\in[0,T]\times \R.
\end{equation}

\begin{proposition}
\label{propassu}
Assume the model parameters satisfy the following condition:
\begin{equation}
\label{eqq51}
\phi_3 e^{1+\frac{\phi_2}{k}}\ge ke^{\frac{ \sigma_2^2}{2k}+\frac{\mu_1}{k}+\frac{\rho}{k}\sigma_1\sigma_2}.
\end{equation}
Then solution $g_2$ in $\eqref{HJBsplitted}$ satisfies condition $\eqref{condong2}$ and is given by
\begin{align}
\nonumber
g_2(t,\epsilon)=1-\phi_3 e^{-\epsilon}\int_t^T\hat{g}(r;t)\;dr&-e^{-\epsilon} \int_t^T \hat{g}(r;t)e^{\bar{\mu}(r-t) \epsilon+\frac{\bar{\sigma}(r-t)^2}{2}+\frac{\rho}{k}\sigma_1\sigma_2(1-\bar{\mu}(r-t))} \cdot\\
\label{eqq25}
&\qquad\cdot\left( k\bar{\mu}(r-t) \epsilon+k\bar{\sigma}(r-t)^2-\frac{ \sigma_2^2}{2} -\rho\sigma_1\sigma_2\bar{\mu}(r-t)-\mu_1+\phi_2\right)\;dr
\end{align}
for $(t,\epsilon)\in [0,T]\times \R$, where $\bar{\mu}$, $\bar{\sigma}$ and $\hat{g}$ are functions defined by
\begin{align}
\label{eqqmu}
\bar{\mu}(s)=e^{-ks},\quad \bar{\sigma}(s)^2=\frac{\sigma_2^2}{2k}\left(1-e^{-2ks}\right), \quad \hat{g}(r;t)=\exp\left(\frac{1}{\eta} \int_t^r g_3(s)\; ds +\mu_1(r-t)\right).
\end{align}
Moreover, the optimal control is given by
\begin{equation}
\label{eqq7bis}
c^*(t,a,\epsilon,q)= \frac 1{2\eta}\left[ae^{\epsilon}\left( 1-g_2(t,\epsilon )\right)-2qg_3(t) \right] \mathds{1}_{q>0} .
\end{equation}
\end{proposition}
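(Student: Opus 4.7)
The plan is to reduce the linear PDE for $g_2$ to a Feynman--Kac representation, evaluate the resulting expectation in closed form using the log-normal law of $S_r$, and then verify that $g_2\le 1$ under the parameter hypothesis \eqref{eqq51}.

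First, I introduce $U(t,a,\epsilon):=ae^\epsilon g_2(t,\epsilon)$. Using the second equation of \eqref{HJBsplitted} together with the chain rule, a direct computation shows that $U$ satisfies the linear parabolic PDE
$$\frac{\partial U}{\partial t}+\mathcal{L}U+\frac{g_3(t)}{\eta}U=\phi_2 ae^\epsilon+\phi_3 a+\frac{g_3(t)}{\eta}ae^\epsilon,\qquad U(T,a,\epsilon)=ae^\epsilon.$$
Applying Feynman--Kac with the diffusion $(A_r,\varepsilon_r)$ of \eqref{Awritten}--\eqref{epsilonwritten} and noting that $\ln S_r$ is Gaussian, the relevant conditional expectations are available in closed form; in particular $\E_t[S_r]=a\exp\bigl(\mu_1(r-t)+\bar\mu(r-t)\epsilon+\tfrac{\rho}{k}\sigma_1\sigma_2(1-\bar\mu(r-t))+\bar\sigma(r-t)^2/2\bigr)$, which produces exactly the factor $K(r,\epsilon):=\exp\bigl(\bar\mu(r-t)\epsilon+\tfrac{\rho}{k}\sigma_1\sigma_2(1-\bar\mu(r-t))+\bar\sigma(r-t)^2/2\bigr)$. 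Dividing by $s=ae^\epsilon$ then expresses $g_2(t,\epsilon)$ as a linear combination of $\hat g(T;t)K(T,\epsilon)$ and integrals of $\hat g(r;t)$, $\hat g(r;t)K(r,\epsilon)$, and $\hat g(r;t)K(r,\epsilon)g_3(r)/\eta$ over $[t,T]$.

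Second, I recast the expression into the form \eqref{eqq25} by integration by parts. A direct differentiation yields
$$\frac{d}{dr}\bigl[e^{-\epsilon}\hat g(r;t)K(r,\epsilon)\bigr]=e^{-\epsilon}\hat g(r;t)K(r,\epsilon)\Bigl[\frac{g_3(r)}{\eta}+\phi_2-L(r,\epsilon)\Bigr],$$
where $L(r,\epsilon):=k\bar\mu(r-t)\epsilon+k\bar\sigma(r-t)^2-\tfrac{\sigma_2^2}{2}-\rho\sigma_1\sigma_2\bar\mu(r-t)-\mu_1+\phi_2$. Integrating from $t$ to $T$ and using $\hat g(t;t)K(t,\epsilon)=e^\epsilon$ absorbs the Feynman--Kac terminal contribution $e^{-\epsilon}\hat g(T;t)K(T,\epsilon)$ together with the integral containing $g_3(r)/\eta$ into the constant $1$ plus an integrand proportional to $K(r,\epsilon)L(r,\epsilon)$, recovering \eqref{eqq25}.

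The main obstacle is the third step: the condition \eqref{condong2}, equivalent to
$$\int_t^T\hat g(r;t)\bigl[\phi_3+K(r,\epsilon)L(r,\epsilon)\bigr]\,dr\ge 0\qquad \forall\,(t,\epsilon).$$
Since $\hat g>0$, it suffices to establish pointwise non-negativity of the bracket. Computing $\partial_\epsilon(KL)=\bar\mu(r-t)K(L+k)$, the map $\epsilon\mapsto K(r,\epsilon)L(r,\epsilon)$ has a unique global minimum at the point $\epsilon^*(r)$ defined by $L(r,\epsilon^*(r))=-k$, with minimum value $-kK(r,\epsilon^*(r))$. Substituting back, $K(r,\epsilon^*(r))=\exp\bigl(-1+\tfrac{\mu_1-\phi_2+\rho\sigma_1\sigma_2}{k}+\tfrac{\sigma_2^2}{4k}(1+\bar\mu(r-t)^2)\bigr)$, which is increasing in $\bar\mu(r-t)$ and therefore maximised over $r\in[t,T]$ at $r=t$, giving $e^{-1+(\mu_1-\phi_2+\rho\sigma_1\sigma_2)/k+\sigma_2^2/(2k)}$. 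Hypothesis \eqref{eqq51} is exactly $\phi_3\ge k$ times this supremum, so $\phi_3+K(r,\epsilon)L(r,\epsilon)\ge 0$ holds for every $(r,\epsilon)\in[t,T]\times\R$. With $g_2\le 1$ established, inequality \eqref{eqqfirstcond} follows from $a>0$ and $g_3<0$; the $\max$ in \eqref{eqq4} is resolved as $ae^\epsilon-\partial_q w\ge 0$, and substituting $\partial_q w=ae^\epsilon g_2+2qg_3$ into \eqref{eqq4} yields \eqref{eqq7bis}.
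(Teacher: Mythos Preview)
Your proof is correct, but it follows a somewhat different path from the paper's. The paper introduces directly the quantity whose non-negativity is at stake, namely $g^*(t,\epsilon):=e^\epsilon(1-g_2(t,\epsilon))$, derives a one-dimensional linear PDE for $g^*$ in $(t,\epsilon)$ alone, and applies Feynman--Kac with a \emph{one-dimensional} auxiliary OU process $d\tilde\varepsilon_r=(\rho\sigma_1\sigma_2-k\tilde\varepsilon_r)\,dr+\sigma_2\,dW_r$. The Gaussian expectation $\E_t[e^{\tilde\varepsilon_r}(k\tilde\varepsilon_r+\text{const})]$ is then computed in closed form, yielding \eqref{eqq25} immediately with no integration-by-parts step. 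Your route via $U=ae^\epsilon g_2$ uses the full two-dimensional diffusion $(A_r,\varepsilon_r)$ and the operator $\mathcal{L}$; this produces an expression involving a terminal term $\hat g(T;t)K(T,\epsilon)$ and an integral with the factor $g_3(r)/\eta$, which you then have to eliminate via the identity $\frac{d}{dr}[\hat g K]=\hat g K(\frac{g_3}{\eta}+\phi_2-L)$. Both routes are valid and end up with the same integral representation; the paper's choice of working with $g^*$ rather than $g_2$ is more economical because it suppresses the redundant variable $a$ and lands directly on a representation with zero terminal data, while your approach has the advantage of staying with the original state process and operator $\mathcal{L}$. The crucial third step---minimising $\phi_3+K(r,\epsilon)L(r,\epsilon)$ over $\epsilon$ via $\partial_\epsilon(KL)=\bar\mu K(L+k)$ and checking that \eqref{eqq51} dominates the worst case $r=t$---is carried out identically in both arguments.
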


Note that for any fixed parameters $k,\sigma_1,\sigma_2,\mu_1, \rho$, one can always choose $\phi_2$ and $\phi_3$ such that $\eqref{eqq51}$ is satisfied, and that $g_1$ in $\eqref{HJBsplitted}$ can be written, with the help of the Feynman-Kac formula, as
\begin{equation}
\label{eqqg1}
g_1(t,a,\epsilon)=\frac{1}{4\eta}  \mathbb{E}_t\Bigg[ \int_t^T S_r^2\left(1-g_2(r, \varepsilon_r)\right)^2\; dr \Bigg].
\end{equation}
Combining $\eqref{eqqg1}$ with Proposition \ref{propassu}, we have the following result:

\begin{theorem}
\label{thmbig2}
Assume condition $\eqref{eqq51}$ is satisfied. Then equations in $\eqref{HJBsplitted}$ admit classical solutions $g_1$, $g_2$ and $g_3$ given by $\eqref{eqqg1}$, $\eqref{eqq25}$ and $\eqref{g3}$ respectively.
\end{theorem}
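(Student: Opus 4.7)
\textbf{Proof plan for Theorem \ref{thmbig2}.} System \eqref{HJBsplitted} has a triangular structure: the third equation determines $g_3(t)$, the second then determines $g_2(t,\epsilon)$, and the first determines $g_1(t,a,\epsilon)$. I would verify the three formulas in this order. For $g_3$, the autonomous Riccati ODE $g_3' = \phi_1 - g_3^2/\eta$ with $g_3(T)=-\chi$ has equilibria $\pm\sqrt{\phi_1\eta}$; separation of variables produces \eqref{g3}, which a short computation confirms by direct differentiation and substitution, yielding the stated sign and monotonicity. The classical solvability of the second PDE is already the content of Proposition \ref{propassu}, whose hypothesis is precisely \eqref{eqq51}, so under this assumption the explicit $g_2$ of \eqref{eqq25} is a $C^{1,2}([0,T]\times\R)$ solution satisfying $g_2 \le 1$.

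It remains to establish that the first equation of \eqref{HJBsplitted}, which is the linear parabolic PDE
\[
\partial_t g_1 + \mathcal{L} g_1 + f(t,a,\epsilon) = 0, \qquad g_1(T,a,\epsilon)=0,
\]
with source $f(t,a,\epsilon) := (4\eta)^{-1} a^2 e^{2\epsilon}\bigl(1-g_2(t,\epsilon)\bigr)^2$, admits the classical solution \eqref{eqqg1}. The approach is Feynman--Kac applied to the diffusion $(A,\varepsilon)$ whose generator is the $(a,\epsilon)$-part of $\mathcal{L}$ (the $q$-derivative terms in $\mathcal{L}$ drop out because $g_1$ does not depend on $q$). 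Using $S_r = A_r e^{\varepsilon_r}$, the Feynman--Kac representation coincides with \eqref{eqqg1}.

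The main technical obstacle is to confirm that this representation actually defines a $C^{1,2}$ function solving the PDE in the classical sense. Two ingredients are needed. First, integrability: $A$ is a GBM and $\varepsilon$ is a Gaussian OU process, so all exponential moments of $A_r$ and $e^{c\varepsilon_r}$ are finite; combined with the polynomial-in-$e^{\pm\epsilon}$ bound on $(1-g_2)^2$ coming from the explicit form \eqref{eqq25}, this makes the expectation in \eqref{eqqg1} finite and locally uniformly bounded in $(t,a,\epsilon)$, together with its higher moments. Second, regularity: the log-transform $a = e^{\ell}$ converts $(A,\varepsilon)$ into a pair with constant smooth coefficients and smooth drift, and the source $f$ is smooth in $(\ell,\epsilon)$; standard parabolic regularity (see Pham~\cite{Pham}) or direct differentiation under the expectation using the explicit Gaussian dependence of $(\ln A_r, \varepsilon_r)$ on the initial data then yields $g_1 \in C^{1,2}$ and the PDE. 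Combining these three steps proves the theorem.
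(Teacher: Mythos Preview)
Your proposal is correct and follows essentially the same triangular approach the paper uses: the Riccati ODE for $g_3$, Proposition~\ref{propassu} for $g_2$, and the Feynman--Kac representation \eqref{eqqg1} for $g_1$. In fact you supply more detail than the paper does on why \eqref{eqqg1} defines a genuine $C^{1,2}$ solution (integrability of the source via GBM and OU moment bounds, and regularity via the log-transform), whereas the paper simply invokes the Feynman--Kac formula without further justification.
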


\begin{proposition}
\label{propcstar}
Let $c^*$ be defined as in $\eqref{eqq7bis}$, let $\gamma$ be any positive real number, let $N>0$ be big enough and let $(t,\x)\in[0,T]\times \mathcal{O}$. Then, $c^*(r,\X_r) \in \tilde{\mathcal{A}}_{\gamma,N}(t,\x)$, as defined in $\eqref{defAtilde}$.
\end{proposition}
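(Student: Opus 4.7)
The plan is to verify the two defining requirements of $\tilde{\mathcal A}_{\gamma,N}(t,\x)$: the progressive-measurability / non-negativity / square-integrability packaged into $\mathcal A$, and the $(2+\gamma)$-moment bound in (\ref{defAtilde}). Progressive measurability is immediate because $c^*$ in (\ref{eqq7bis}) is a deterministic Borel function of $(r,\X_r)$, and non-negativity follows from $g_2\le 1$ (guaranteed by (\ref{condong2})) together with $g_3\le 0$ on $[0,T]$ and $Q_r\ge 0$. Square-integrability will be a by-product of the $(2+\gamma)$-moment estimate for any $\gamma>0$. So everything reduces to one pointwise bound on $c^*$ followed by moment computations.

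First I would derive a clean pointwise estimate
$$c^*(r,a,\epsilon,q)\le C_1(1+a)\bigl(1+e^{N_1\epsilon}\bigr)$$
with $C_1,N_1>0$ depending only on the model parameters and $T$. The $-2qg_3(t)/(2\eta)$ piece is bounded by a constant since $g_3$ is continuous on $[0,T]$ and $Q_r\le q_0\le\bar Q_0$. For the remaining term $\frac{1}{2\eta}ae^{\epsilon}(1-g_2(t,\epsilon))$ I would factor the $e^{-\epsilon}$ out of the representation (\ref{eqq25}) and use: (i) $\hat g(r;t)$ and $e^{\bar\sigma(r-t)^2/2+\frac{\rho\sigma_1\sigma_2}{k}(1-\bar\mu(r-t))}$ are bounded on $\{t\le r\le T\}$; (ii) the bracket $|k\bar\mu(r-t)\epsilon+k\bar\sigma(r-t)^2-\tfrac{\sigma_2^2}{2}-\rho\sigma_1\sigma_2\bar\mu(r-t)-\mu_1+\phi_2|$ is $O(1+|\epsilon|)$; (iii) the product $(1+|\epsilon|)e^{\bar\mu(r-t)\epsilon}$ is uniformly controlled. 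The delicate piece of (iii) is $\epsilon\to-\infty$: writing $x=-\epsilon\ge 0$ and $\bar\mu(r-t)\ge e^{-kT}$, calculus gives $\max_{x\ge 0}xe^{-\bar\mu x}=(e\bar\mu)^{-1}\le e^{kT-1}$, so $|\epsilon|e^{\bar\mu(r-t)\epsilon}$ is bounded for $\epsilon\le 0$ uniformly in $r-t\in[0,T]$. For $\epsilon\ge 0$ one just uses $\bar\mu(r-t)\le 1$, i.e.\ $e^{\bar\mu(r-t)\epsilon}\le e^{\epsilon}$. Combining the two regimes yields the pointwise bound above (one may take $N_1=2$).

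Substituting $(A_r,\varepsilon_r,Q_r)$ and raising to the power $2+\gamma$ with $(1+x)^{2+\gamma}\le 2^{1+\gamma}(1+x^{2+\gamma})$ gives
$$(c^*_r)^{2+\gamma}\le C_2\bigl(1+A_r^{2+\gamma}\bigr)\bigl(1+e^{N_1(2+\gamma)\varepsilon_r}\bigr).$$
Cauchy--Schwarz separates the two factors. For the GBM on $[0,T]$, $\E[A_r^p]\le C_p\,a^p$ for every $p>0$. For the OU law $\varepsilon_r\sim\mathcal N(\epsilon e^{-k(r-t)},\bar\sigma(r-t)^2)$, an explicit Gaussian moment-generating-function calculation gives $\E[e^{p\varepsilon_r}]=\exp\bigl(p\epsilon e^{-k(r-t)}+p^2\bar\sigma(r-t)^2/2\bigr)\le C'_p(1+e^{p\epsilon})$, because the mean is at most $p\epsilon_+$ for $p>0$. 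Putting the two together,
$$\E\bigl[(c^*_r)^{2+\gamma}\bigr]\le C_3(1+a)^{2+\gamma}(1+e^{N_2\epsilon})$$
for a suitable $N_2>0$. Integrating over $r\in[t,T]$, taking the $(2+\gamma)$-th root, and using $(1+y)^{1/(2+\gamma)}\le 1+y^{1/(2+\gamma)}$ produces
$$\left(\E\int_t^T(c^*_r)^{2+\gamma}\,dr\right)^{1/(2+\gamma)}\le C_4(1+a)(1+e^{N_3\epsilon}),$$
and choosing $N\ge\max(C_4,N_3)$ places $c^*(\cdot,\X_\cdot)$ inside $\tilde{\mathcal A}_{\gamma,N}(t,\x)$.

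The main obstacle is the bookkeeping in the pointwise bound on $1-g_2$: one must control the $e^{-\epsilon}$ blow-up at $\epsilon\to-\infty$ (cancelled cleanly by the outer $e^{\epsilon}$) and the $e^{\bar\mu(r-t)\epsilon}$ blow-up at $\epsilon\to+\infty$, while simultaneously ensuring that the linear factor $|\epsilon|$ in the integrand of (\ref{eqq25}) does not destroy the bound at $\epsilon\to-\infty$. The key analytic fact that makes this work is the strictly positive lower bound $\bar\mu(r-t)\ge e^{-kT}$, which forces $|\epsilon|e^{\bar\mu(r-t)\epsilon}$ to remain bounded uniformly on $[t,T]\times(-\infty,0]$.
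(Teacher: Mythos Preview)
Your proposal is correct and follows essentially the same route as the paper's proof: establish a pointwise exponential bound on $c^*$ of the form $C(1+a)(1+e^{N_1\epsilon})$, then pass to the $(2+\gamma)$-moment via H\"older/Cauchy--Schwarz together with the standard moment estimates for $A_r$ and $e^{p\varepsilon_r}$. The paper is terser---it simply asserts the pointwise growth of $c^*$ from ``boundedness of $Q_r$ and $g_3$'' and ``linear exponential growth of $g_2$ with respect to $\epsilon$'' and then invokes the $\sup_{r}$ moment bounds of Lemma~\ref{lemmaonboundexpeps} and \eqref{eqqGBMbound}---whereas you actually justify that growth by analysing the integrand of \eqref{eqq25} and handling the $\epsilon\to-\infty$ case via $\bar\mu(r-t)\ge e^{-kT}$; this extra care is a genuine improvement over the paper's sketch, but the architecture of the two arguments is the same.
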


\begin{remark}
\label{exanduniqsde}
Let $c^*$ be defined as in $\eqref{eqq7bis}$. By the fact that $g_2$ is locally Lipschitz, we get that $c^*$, drift and diffusion coefficients of SDE $\eqref{SDEver}$ are locally Lipschitz. Existence and uniqueness of solution to equation $\eqref{SDEver}$ follow by applying Karatzas and Shreve~\cite[Theorem 2.5]{Kar}.
\end{remark}

\begin{remark}
Let condition $\eqref{eqq51}$ be satisfied and $g_1$, $g_2$ and $g_3$ be the classical solutions to the equations in $\eqref{HJBsplitted}$ as in Theorem \ref{thmbig2}. As proved in Remark \ref{exanduniqsde} and Proposition \ref{propcstar}, all conditions of verification Thorem \ref{vertheo} are satisfied except for continuity of $w$. Indeed function $w$ is not continuous for $q\to 0$, unless $g_1\equiv 0$ on $[0,T]\times \mathcal{O}$. Hence, $w$ does not necessarily coincide with the value function $v$ in $\eqref{1valfunc}$, unless it is proved to be continuous on $q=0$.
\end{remark}

As proved in Proposition \ref{propcstar}, the optimal control $c^*$ lies in the more restrictive control set $\tilde{\mathcal{A}}_{\gamma,N}$ defined in $\eqref{defAtilde}$. Proposition \ref{propAtilde} ensures that if the control set is reduced to $\tilde{\mathcal{A}}_{\gamma,N}$, then the value function $v$ is continuous. We conclude that if $g_1$ is not identically equal to $0$, then the solution $w$ to the HJB equation does not coincide with the value function.

If we reduce our model to a one stock model as that in Cartea et al.~\cite{Cartea} with the same  parameters, i.e.,   $\epsilon_0=0, \sigma_2=0, \ \mu_1=0, \ \rho=0, \ k=0, \ \phi_2=\phi_3=0$,  then condition $\eqref{eqq51}$ is satisfied. Using $\eqref{eqq25}$ and $\eqref{eqqg1}$, we get $g_2(t,\epsilon)=1$ and $g_1(t,a,\epsilon)=0$ for any $(t,a,\epsilon)\in[0,T]\times (0,\infty) \times\R$, which makes $w$ in $\eqref{eqq6}$ continuous in the whole domain. We can apply Theorem~\ref{vertheo} to verify that $w$ coincides with the value function $v$.

\begin{proposition}
\label{remarkonverif}
Assume condition $\eqref{eqq51}$ is satisfied and $g_1\equiv 0$, $g_2$ and $g_3$ are the classical solutions to the equations $\eqref{HJBsplitted}$. Then, function
\begin{equation*}
w(t,a,\epsilon,q)=qae^{\epsilon} g_2(t,\epsilon)+q^2 g_3(t).
\end{equation*}
coincides with the value function $v$ in $\eqref{1valfunc}$ on $[0,T]\times \mathcal{O}$.
\end{proposition}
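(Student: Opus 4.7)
The plan is to apply the Verification Theorem \ref{vertheo} to the candidate $w(t,a,\epsilon,q) = qae^\epsilon g_2(t,\epsilon) + q^2 g_3(t)$. The critical observation is that the hypothesis $g_1 \equiv 0$, combined with the first equation of \eqref{HJBsplitted}, forces $\tfrac{a^2 e^{2\epsilon}}{4\eta}(1-g_2)^2 \equiv 0$ on $(0,\infty) \times \R$, hence $g_2(t,\epsilon) \equiv 1$. The candidate thus reduces to $w(t,a,\epsilon,q) = qae^\epsilon + q^2 g_3(t)$, which is continuous on the whole closed domain $[0,T] \times \bar{\mathcal{O}}$---in particular across $\{q=0\}$---with $w(t,a,\epsilon,0) = 0$ and $w(T,a,\epsilon,q) = q(ae^\epsilon - \chi q)$ matching the boundary and terminal conditions of the HJB. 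This is precisely the continuity that the preceding remark identified as the obstruction to invoking Theorem \ref{vertheo} in the general Theorem \ref{thmbig2} setting.

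I would then check the remaining hypotheses in turn. Regularity is immediate: $g_3 \in C^1([0,T])$ from the closed form \eqref{g3}, so $w \in C^{1,2}([0,T) \times \mathcal{O})$. The growth bound $|w(t,\x)| \le C(1+q^2)(1+a)(1+e^\epsilon)$ follows trivially from the boundedness of $g_3$ on $[0,T]$. The HJB equation is verified by direct substitution: the $q^2$ coefficient recovers the Riccati identity satisfied by $g_3$, while the remaining coefficients collapse using the parameter identities forced by $g_2 \equiv 1$ being a solution of the second equation of \eqref{HJBsplitted}. Since $g_3 \le 0$, we have $ae^\epsilon - \partial_q w = -2qg_3 \ge 0$, so the supremum in \eqref{1eqq3tris3} is attained at the nonnegative $c^* = -qg_3/\eta\,\mathds{1}_{\{q>0\}}$, which is consistent with \eqref{eqq7bis} for $g_2 \equiv 1$. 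Existence and uniqueness for the SDE \eqref{SDEver} under this $c^*$ are provided by Remark \ref{exanduniqsde}, and admissibility $c^*(r,\X_r) \in \tilde{\mathcal{A}}_{\gamma,N} \subset \mathcal{A}$ is Proposition \ref{propcstar}.

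All hypotheses of Theorem \ref{vertheo} being met, the theorem yields $w \equiv v$ on $[0,T] \times \mathcal{O}$. The proof is essentially mechanical once the initial collapse is noted; the only conceptual step is observing that $g_1 \equiv 0$ forces $g_2 \equiv 1$ and thereby removes the boundary discontinuity at $q = 0$ that prevented a direct verification in the general case.
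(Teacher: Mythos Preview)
Your proposal is correct and follows the same route as the paper: the assumption $g_1\equiv 0$ removes the discontinuity of the ansatz \eqref{eqq6} at $q=0$, and then all hypotheses of the Verification Theorem~\ref{vertheo} are in place via Remark~\ref{exanduniqsde} and Proposition~\ref{propcstar}. The paper's argument is exactly this, stated in the remark immediately preceding Proposition~\ref{remarkonverif}.

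One minor comment: your deduction that $g_1\equiv 0$ forces $g_2\equiv 1$ through the first equation of \eqref{HJBsplitted} is correct and gives additional insight (it shows how restrictive the hypothesis really is on the model parameters), but it is not needed for the verification. The paper works directly with $w=qae^{\epsilon}g_2+q^2g_3$ and uses that the full system \eqref{HJBsplitted} is satisfied by hypothesis; continuity at $q=0$ and the growth bound already follow without simplifying $g_2$ further. Your route is slightly more explicit, the paper's slightly more economical, but the substance is identical.
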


If condition $\eqref{eqq51}$ is not satisfied, then it is not clear if HJB equation $\eqref{1eqq3tris3}$ admits a classical solution, however, Theorem \ref{thmviscsolgen1} states that the value function $v$ is the unique viscosity solution to the HJB equation $\eqref{1eqq3tris3}$.

\begin{remark}
The model can be extended to cover limit orders as well by introducing a premium for executing limit orders instead of market orders (see Cartea et al.~\cite{Cartea}) and a new state variable $D_t$ as a measure of uncertainty in filling limit orders. We can prove all theorems in this more general setup and, under similar conditions to that in $\eqref{eqq51}$, prove the existence of a classical solution to the HJB equation that coincides with the value function. The model can also be extended to the multi-dimensional case, in which an agent aims to liquidate $m$ different stocks $S^1,\ldots,S^m$ on a basket of $n$ correlated stocks $A^1,\ldots, A^n$.
\end{remark}

\section{FBSDE approach}
\label{sectionFBSDE}
In this section we approach the control problem $\eqref{eqmax}$, by using the stochastic maximum principle (c.f. Pham~\cite[Theorem 6.4.6]{Pham} and Li and Zheng~\cite{LiZheng}). It is a standard approach to write the value function $v$ defined in $\eqref{1valfunc}$ as a solution to an FBSDE and to find the optimal control from the maximization of the Hamiltonian associated to the optimization problem. To apply stochastic maximum principle, we approximate problem $\eqref{eqmax}$, by replacing the stochastic terminal time with a fixed terminal time $T$. We rewrite the value function as
\begin{align}
\label{new1eqq15}
v(t,a,\epsilon,q)= \sup_{c\in\mathcal{A}}\mathbb{E}_t&\bigg[ Q_T \left(S_T-\chi Q_T \right) +\int_t^T c_r(S_r-\eta c_r)\; dr \\
\nonumber
&\quad-\phi_1 \int_t^T Q_r^2 \;dr-\phi_2 \int_t^T S_r Q_r \;dr-\phi_3 \int_t^T A_r Q_r \;dr \bigg].
\end{align}
From SDE $\eqref{SDEdef}$, we get that the only component of the state process $\X_t$ depending on the control $c$ is $Q_t$. We define the Hamiltonian $\mathcal{H}: \mathcal{O} \times [0,\infty)\times \R\to \R$ as
\begin{equation*}
\mathcal{H}(\x,c,y)=-cy+c(ae^{\epsilon}-\eta c)-\phi_1 q^2-\phi_2 ae^\epsilon q-\phi_3 a q.
\end{equation*}
The BSDE associated to our problem is
\begin{equation*}
dY_r=-g(\X_r)dr+\Z_r d\bm{W}_r,
\end{equation*}
with terminal condition $Y_T=h(\X_T)$, where
\begin{equation*}
g(\x)=\frac{\partial}{\partial q} \mathcal{H}(\x)=-2\phi_1 q-\phi_2 ae^\epsilon-\phi_3 a, \qquad h(\x)=ae^{\epsilon}-2\chi q.
\end{equation*}
Using the stochastic maximum principle approach, it follows that the optimal trading strategy $c^*$ coincides with the control function $c$ that maximizes the Hamiltonian $\mathcal{H}$, which is 
\begin{equation}
\label{optimalstrat}
c^*(\x, y)=\frac{1}{2\eta}\left(ae^\epsilon -y \right)^+ .
\end{equation}
In the following we prove that the solution to the above BSDE can be used to find the optimal strategy of the optimization problem $\eqref{new1eqq15}$. The proof of the theorem below immediately follows from Pham~\cite[Theorem 6.4.6]{Pham} using concavity of Hamiltonian $\mathcal{H}$ with respect to variables $(\x,c)$ and maximality of $c^*$ in $\eqref{optimalstrat}$ for the Hamiltonian $\mathcal{H}$.
\begin{theorem}[Stochastic maximum principle]
\label{SMP}
Suppose that the FBSDE
\begin{equation}
\label{BSDEform}
\begin{cases}
dQ_r=-\frac{1}{2\eta}(A_r e^{\varepsilon_r}-Y_r )^+ dr\\
dY_r=-g(A_r, \varepsilon_r, Q_r)dr+\Z_r \cdot d\W_r\\
Q_t=q\\
Y_T=h(A_T, \varepsilon_T, Q_T)
\end{cases}.
\end{equation}
admits a solution $(Q_t^*, Y_t,\Z_t)_{t\in[0,T]}$ and that $(Y_t)_{t\in[0,T]}$ is a progressively measurable, non-negative and square integrable process. Then $c^*$, defined in $\eqref{optimalstrat}$, is the optimal control of problem $\eqref{new1eqq15}$.
\end{theorem}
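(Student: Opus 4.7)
The plan is to apply the sufficient stochastic maximum principle: fix an arbitrary admissible $c\in\mathcal{A}$ with inventory $Q^c$, compare it with the candidate $c^*_r=(A_r e^{\varepsilon_r}-Y_r)^+/(2\eta)$ and its inventory $Q^*$, and prove that $v^{c^*}(t,\x)-v^c(t,\x)\ge 0$. Because only $Q$ is affected by the control, the processes $A$ and $\varepsilon$ (hence $S=Ae^{\varepsilon}$) coincide under the two regimes, and the comparison reduces to a one-dimensional variational argument in the $q$-direction.

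First I would split the performance gap into a terminal and a running piece. With $F(\x)=q(ae^{\epsilon}-\chi q)$ and $f(\x,c)=c(ae^{\epsilon}-\eta c)-\phi_1 q^2-\phi_2 ae^{\epsilon}q-\phi_3 aq$, one has $\partial_q F=h$ and $\mathcal{H}(\x,c,y)=-cy+f(\x,c)$. Concavity of $F$ in $q$ gives $F(\X_T^*)-F(\X_T^c)\ge h(\X_T^*)(Q_T^*-Q_T^c)=Y_T(Q_T^*-Q_T^c)$. Next, It\^o's formula applied to $Y_r(Q_r^*-Q_r^c)$, together with $Q_t^*=Q_t^c=q$ and the fact that $Q^*-Q^c$ is of finite variation (so the cross-variation with $Y$ vanishes), yields
\begin{equation*}
\E_t\bigl[Y_T(Q_T^*-Q_T^c)\bigr]=\E_t\!\left[\int_t^T\bigl\{-g(\X_r^*)(Q_r^*-Q_r^c)+Y_r(c_r-c_r^*)\bigr\}dr\right].
\end{equation*}
Then, joint concavity of $\mathcal{H}(\cdot,\cdot,Y_r)$ in $(q,c)$ combined with the pointwise maximality of $c^*$ over $c\ge 0$ (the Kuhn--Tucker inequality $\partial_c\mathcal{H}(\X_r^*,c_r^*,Y_r)(c_r-c_r^*)\le 0$ is immediate in the interior case, where the partial vanishes, and on the boundary $c^*=0$, where $\partial_c\mathcal{H}|_{c=0}=Ae^{\varepsilon}-Y\le 0$ while $c-c^*\ge 0$) gives
\begin{equation*}
f(\X_r^*,c_r^*)-f(\X_r^c,c_r)\ge g(\X_r^*)(Q_r^*-Q_r^c)-Y_r(c_r-c_r^*).
\end{equation*}
Integrating, taking conditional expectation and using the It\^o identity above cancels the adjoint terms and leaves $v^{c^*}(t,\x)-v^c(t,\x)\ge \E_t[Y_T(Q_T^*-Q_T^c)]-\E_t[Y_T(Q_T^*-Q_T^c)]=0$.

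The main obstacle is to justify that the local martingale $\int_t^{\cdot}(Q_r^*-Q_r^c)\Z_r\,d\W_r$ arising in the It\^o step is a true martingale, so that its conditional expectation vanishes. This combines the square-integrability of $\Z$ from standard BSDE theory, the hypothesis that $Y$ is square-integrable, and a uniform $L^2$ bound on $Q^c$ inherited from admissibility $\E[\int_0^T c_r^2\,dr]<\infty$; a Cauchy--Schwarz or Burkholder--Davis--Gundy estimate should then deliver the martingale property. The remaining ingredients---concavity of $F$, concavity of $\mathcal{H}$ in $(q,c)$, and the first-order optimality condition for $c^*$---are direct algebraic verifications.
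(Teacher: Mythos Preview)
Your proposal is correct and follows exactly the route the paper takes: the paper simply invokes Pham~\cite[Theorem 6.4.6]{Pham}, noting the joint concavity of $\mathcal{H}$ in $(\x,c)$ and the maximality of $c^*$, and your argument is precisely the unfolding of that sufficient maximum principle (terminal concavity, It\^o on $Y(Q^*-Q^c)$, Hamiltonian concavity plus KKT, and the martingale justification). There is no substantive difference in approach.
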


In the following section we focus on finding a solution to FBSDE $\eqref{BSDEform}$, which is a coupled non-linear Forward-Backward SDE and, to our knowledge, cannot be explicitly solved. In the numerical section below, we use a deep learning-based method, following the one presented in Weinan et al.~\cite{Jentzen}, to find an approximated solution to FBSDE $\eqref{BSDEform}$ and we show that the closed form control in $\eqref{eqq7bis}$ is close to the approximated version of the optimal control in $\eqref{optimalstrat}$.

\section{Numerical Tests}
\label{sectionnumeric}
This section is divided in two parts. The first subsection shows that the closed form control $\eqref{eqq7bis}$ deriving from the HJB equation and the neural network (NN) approximated control $\eqref{optimalstrat}$ deriving from the FBSDE approach are close to each other. In the second subsection we compare the performance of the closed form control $\eqref{eqq7bis}$ based on $A_t$ and $\varepsilon_t$ with respect to the optimal strategy based on two simplified models based on geometric Brownian motion approximations of the liquidating stock price.

\subsection{Neural network approximation vs. closed form control}
\label{subsectionNN}
In this subsection we compare the control obtained through the NN approximated solution of the FBSDE with the closed form control in $\eqref{eqq7bis}$. To numerically find the solution of the FBSDE $\eqref{BSDEform}$ we apply a similar method to the one in Weinan et al.~\cite{Jentzen}. We adapt \cite[Framework 3.2]{Jentzen} to our case by generalizing the implementation to a coupled FBSDE setting with a multidimensional backward equation. The method consists into a neural network approximation of the two solutions $Y$ and $\Z$ of the FBSDE $\eqref{BSDEform}$, where the backward equation is transformed into a forward equation and initial condition $Y_0$ and process $\Z_t$ are chosen in order to minimize the loss
\begin{equation}
\label{lossfunc}
\text{loss}:=\E[|Y_T-g(\X_T)|],
\end{equation}
in order to guarantee the terminal condition $Y_T=g(\X_T)$.

We run several neural network approximations for different model parameters choices and we compare the results of the FBSDE method with the closed form control from Section \ref{sectionclass}. To compare the two methods, we divide the time interval $[0,T]$ in 40 time steps. To calculate the approximated solution of the FBSDE $\eqref{BSDEform}$, we use a 4 layers neural network as in Weinan et al.~\cite{Jentzen} with a batch set made of 64 realizations of $\W$ and a validation set made of 256 realizations. In all numerical examples, we stop training the neural network after 40.000 steps. To calculate the integrals in $\eqref{eqq25}$ used in the representation of the control $c^*$ $\eqref{eqq7bis}$, we apply a quadrature approximation formula. We denote optimal control calculated using NN approximation of the FBSDE solution as $c^{NN}_t$, the inventory process $Q^{NN}_t$ and the wealth process $M^{NN}_t$. For each parameter choice we compare the closed form control $c^*_t$ with $c^{NN}_t$, the inventory process $Q_t$ with $Q^{NN}_t$ and the wealth process $M_t$ with $M^{NN}_t$. 

In the following we show numerical results for 2 different sets of parameters, both satisfying condition $\eqref{eqq51}$. The only differences between the two following settings are volatilities $\sigma_1,\sigma_2$ and the terminal time $T$.
\begin{settings}
\label{sett1}
$A_0=1, \ \epsilon_0=0, \ M_0=1, \ Q_0=20, \ T=0.5, \ \chi=0.5, \ \phi_1=0.003, \ \phi_2=0.06, \ \phi_3=0.06, \ \sigma_1=0.1, \ \sigma_2=0.1, \ k=0.2, \ \eta=0.003, \ \rho=-0.4$.
\end{settings}
\begin{settings}
\label{sett2}
$A_0=1, \ \epsilon_0=0, \ M_0=1, \ Q_0=20, \ T=1, \ \chi=0.5, \ \phi_1=0.003, \ \phi_2=0.06, \ \phi_3=0.06, \ \sigma_1=0.4, \ \sigma_2=0.4, \ k=0.2, \ \eta=0.003, \ \rho=-0.4$.
\end{settings}
In Figure \ref{figconv} is displayed the convergence of loss function $\eqref{lossfunc}$ under Settings \ref{sett1} and \ref{sett2}, which reaches a value lower than $10^{-6}$ after 40.000 training steps in all cases.

\begin{figure}[H]
\centering
  \includegraphics[width=0.7\textwidth]{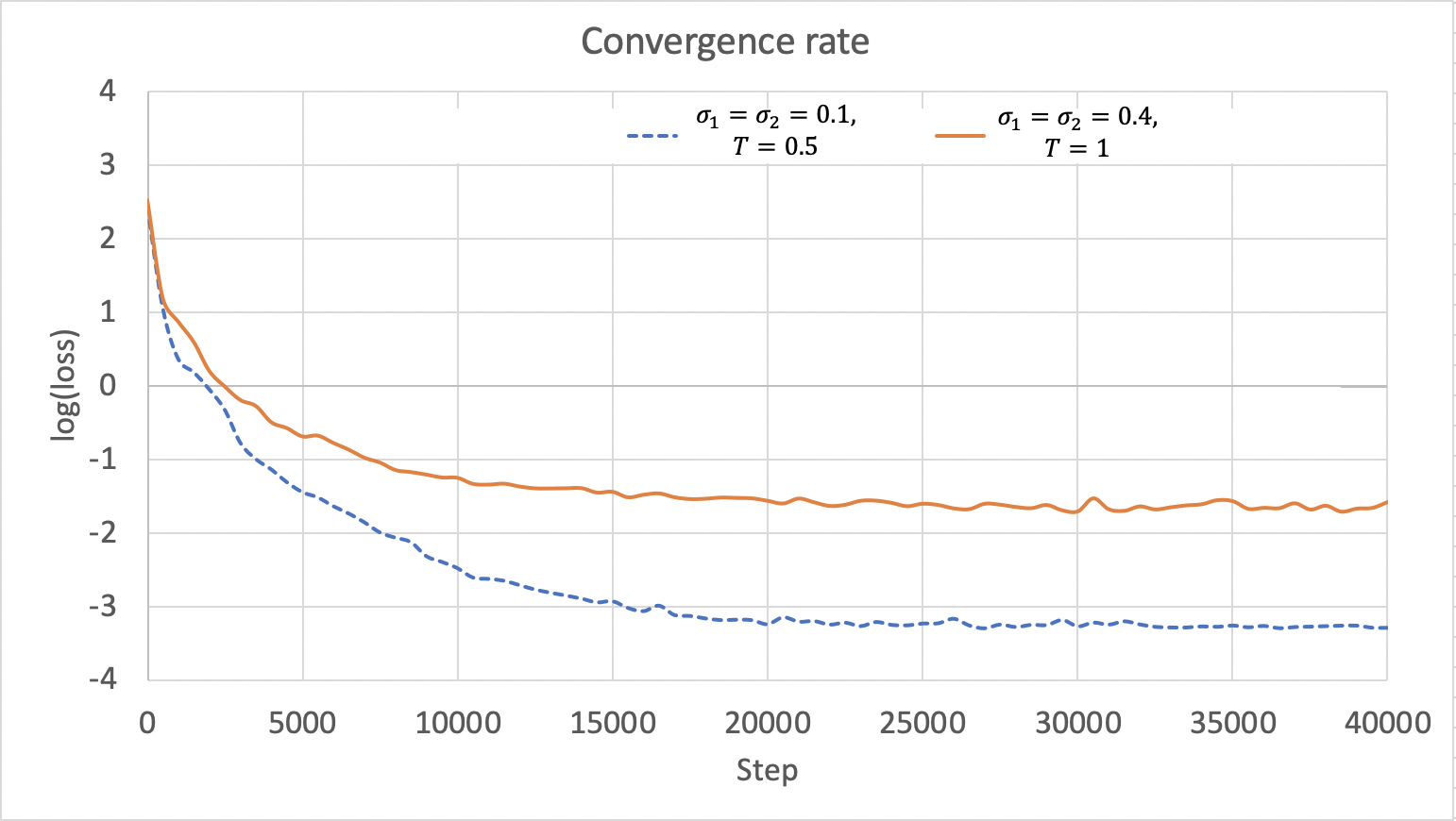}
  \caption{Convergences of logarithm of losses of NNs for three different Settings \ref{sett1} and \ref{sett2}.}
  \label{figconv}
\end{figure}

As a second step, we calculate the average relative discrepancy between $M^{NN}_t$ and $M_t$ over many different realizations of the Brownian motion $(\W_t)_{t\in[0,T]}$ under the two different model parameters sets defined above. In Figure \ref{fig2} is drawn the average and standard deviation of the quantity $\frac{|M^{NN}_t-M_t|}{M^{NN}_t}$ along 400 different realizations of $\W$, for each time step $t$. We notice that in the low volatility case the relative errors $\frac{|M^{NN}_t-M_t|}{M^{NN}_t}$ is low and never exceeding 0.3\%, while in the hight volatility case the discrepancy increases its magnitude to a value of 1\%. 

\begin{figure}[H]
\centering
  \includegraphics[width=0.8\textwidth]{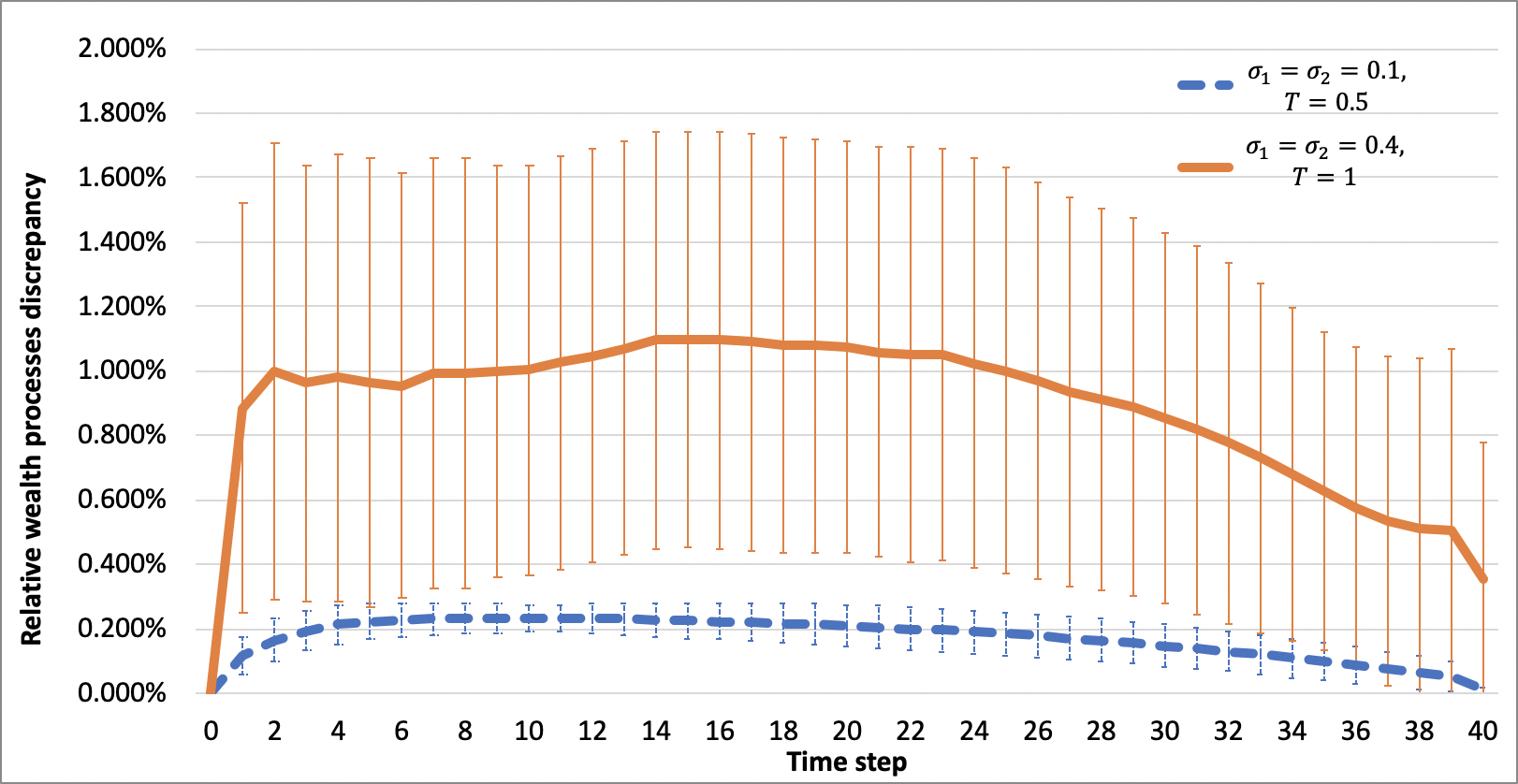}
  \caption{Average and standard deviation of $\frac{|M^{NN}_t-M_t|}{M^{NN}_t}$ along 400 different realizations of $\W$, for each time step $t$ and for different Settings \ref{sett1} and \ref{sett2}.}
  \label{fig2}
\end{figure}

In Table \ref{tableRecap} we group the relative discrepancies in Figure \ref{fig2} and we also consider relative discrepancies of inventories over many different realizations of Brownian motion $(\W_t)_{t\in[0,T]}$ and for different model parameters choices. In Table \ref{tableRecap} is shown the average and standard deviation of the quantities $\frac 1{40}\sum_{t=1}^{40} \frac{|M^{NN}_t-M_t|}{M^{NN}_t}$ and $\frac 1{40}\sum_{t=1}^{40}\frac{|Q^{NN}_t-Q_t|}{Q_0}$ along 400 different realizations of $\W$. We calculate these figures for both Settings \ref{sett1} and \ref{sett2}. 

\restylefloat{table}
\begin{table}[H]
\begin{center}
\begin{tabular}{ |c|c|c|c|c|c|c| }
 \hline
\multirow{2}{*}{Settings} &\multicolumn{2}{|c|}{Av. Rel. Discr. M}&\multicolumn{2}{|c|}{Av. Rel. Discr. Q}&Runtime&Runtime\\ \cline{2-5}
&Mean&St. Dev.&Mean&St. Dev.&closed form&NN \\ \hline
\begin{tabular}{@{}c@{}}$\sigma_1=\sigma_2=0.1,$\\ $T=0.5$\end{tabular} &0.170\% & 0.00057  & 0.105\% & 0.00044& 210 sec.&6850 sec. \\ \hline
\begin{tabular}{@{}c@{}}$\sigma_1=\sigma_2=0.4,$\\ $T=1$\end{tabular} &0.886\% & 0.00601 & 0.665\% & 0.00463&213 sec.& 7120 sec.\\ \hline
\end{tabular}
\caption{Average and standard deviation of $\frac 1{40}\sum_{t=1}^{40} \frac{|M^{NN}_t-M_t|}{M^{NN}_t}$ and $\frac 1{40}\sum_{t=1}^{40}\frac{|Q^{NN}_t-Q_t|}{Q_0}$ along 400 different realizations of $\W$  for Settings \ref{sett1} and \ref{sett2}.}
\label{tableRecap}
\end{center}
\end{table}

In Table \ref{tableInv} we show that the approximation made by removing the stopping time $\tau$ from optimization problem $\eqref{eqmax}$ and fixing it to a terminal time $T$ as in in $\eqref{new1eqq15}$ scarcely affects the value function. Indeed, in Table \ref{tableInv} is shown that the average and standard deviation of the relative discrepancy $|Q_T|/Q_0$ along 400 different realizations of $\W$ are close to $0$. We calculate these figures for both Settings \ref{sett1} and \ref{sett2}. 

\restylefloat{table}
\begin{table}[H]
\begin{center}
\begin{tabular}{ |c|c|c| }
 \hline
Settrings & Mean($|Q_T|/Q_0$)&St. Dev.($|Q_T|/Q_0$)  \\ \hline
\begin{tabular}{@{}c@{}}$\sigma_1=\sigma_2=0.1,$\\ $T=0.5$\end{tabular} &0.0079 & 0.0041 \\ \hline
\begin{tabular}{@{}c@{}}$\sigma_1=\sigma_2=0.4,$\\ $T=1$\end{tabular} &0.0271 & 0.0181 \\ \hline
\end{tabular}
\caption{Average and standard deviation of $|Q_T|/Q_0$ along 400 different realizations of $\W$  for Settings \ref{sett1} and \ref{sett2}.}
\label{tableInv}
\end{center}
\end{table}

In all examples we have shown, the results of the two different methods are close to each other. This increases our confidence in considering the solution of the HJB equation and the trading speed found in Section \ref{sectionclass} respectively equal to the value function and the optimal trading speed of the problem. The computing time necessary  to approximate integrals inside the closed form control representation is around 0.5 seconds for each realization of $\W$. To get an acceptable convergence of the neural network we waited 40.000 steps, taking around 110 minutes for each setting. Once the NN is trained, the computational time for the optimal strategy is around 0.6 seconds for each realization. In conclusion, the NN solution requires a time-consuming initial training that may cause delays any time the model needs to be recalibrated. Once the NN has been trained, the runtimes of the two methods are almost equivalent.

\subsection{Closed form model vs. single stock models}
\label{subsectioncloseform}
In this section we compare our model based on both processes $A_t$ and $\varepsilon_t$ with two simplified models based only on one stock price: one is to approximate the stock price $S$ with a GBM $\tilde{S}$, whose first two moments are equal to those of $S$, and the other is to set the cointegration factor $\varepsilon$ to $0$, whose effect is to approximate the stock price $S$ with that of $A$. Although in both cases the stock price $S$ is approximated with a GBM, the first one is more accurate as it uses the information of the cointegration factor $\varepsilon$. To get the optimal strategy related to approximation $\tilde{S}$, we compare the stock price $S_r$ with a GBM $\tilde{S}_r$ satisfying the following stochastic differential equation (SDE):
\begin{equation*}
d \tilde{S}_r = \tilde{\mu}(r) \tilde{S}_r dr +\tilde{\sigma}(r) \tilde{S}_r dW_r, \quad \tilde{S}_0=s,
\end{equation*}
where $\tilde{\mu}$ and $\tilde{\sigma}$ are deterministic functions that ensure the first two moments of $S_r$ and $\tilde{S}_r$ are the same for $0\le r\le T$, seen at time 0. Since $S_r=A_r e^{\varepsilon_r}$ and $\tilde{S}_r$ are lognormal variables, simple calculus gives
\begin{equation*}
\begin{cases}
\tilde{\mu}(r)=-k\epsilon_0 e^{-kr}+\mu_1+\frac{\sigma_2^2}{2} e^{-2kr}+\rho\sigma_1\sigma_2e^{-kr}\\
\tilde{\sigma}(r)^2 =\sigma_1^2+\sigma_2^2e^{-2kr}+2\rho\sigma_1\sigma_2 e^{-kr}
\end{cases}.
\end{equation*}

\begin{remark}
Note that the initial value of the cointegration factor $\epsilon_0$ appears in $\tilde{\mu}$, which ensures the two processes $S$ and $\tilde{S}$, seen at time $0$, are the same in distribution. They are different, seen at later time $t>0$, as $S$ is determined by two Brownian motions but $\tilde{S}$ by one only. In our numerical test, we approximate the price $S_r$ with the GBM $\tilde{S}_r$ by fixing the cointegration factor to its initial values $\epsilon_0$ throughout the whole trading period $[0,T]$. 
\end{remark}

We solve the stochastic control problem with the same objective function as the one in $\eqref{eqmax}$ without the last term and with $\tilde{S}_r$ instead of $S_r$. The HJB equation is given by
\begin{equation}
\label{HJBGBM}
\frac{\partial w}{\partial t} +\sup_{\tilde{c}\ge 0} \Big[ \frac{\tilde{\sigma}(t)^2}2 \tilde{s}^2 \frac{\partial^2 w}{\partial \tilde{s}^2}+\tilde{\mu}(t) \tilde{s} \frac{\partial w}{\partial \tilde{s}}-\tilde{c}\frac{\partial w}{\partial q}+(\tilde{s}-\eta \tilde{c}) \tilde{c}\Big]-\phi_1 q^2-2\phi_2 q\tilde{s} =0
\end{equation}
on $[0,T)\times (0,\infty)\times [0,q_0]$, with terminal condition $w(T,\tilde{s},q)= q\left(\tilde{s}-\chi q \right)$ and boundary condition $w(t,\tilde{s},0)=0$.
The optimal trading strategy $\tilde{c}^*$ has the following form
\begin{equation*}
\tilde{c}^* =\frac{1}{2\eta} \max\left\{ \tilde{s}-\frac{\partial w}{\partial q}, 0 \right\}.
\end{equation*}
Moreover, equation $\eqref{HJBGBM}$ can be solved using a method similar to the one used in Section 3. Since the solution $w(t,\tilde{s})$ does not depend on $\epsilon$, the equation is easier to be solved.

The second approximation is to use only the price $A$, the optimal trading strategy $c^{A,*}$ has the same formula as that in $\eqref{eqq4}$ with $\epsilon$ equal to $0$.

We compare the performance of our strategy with those of the approximations in different settings. By simulating $S_r$, we can evaluate the performances of the strategies $c^*$, $\tilde{c}^*$ and $c^{A,*}$ respectively based on the price $S$, the GBM price $\tilde{S}$ and the price $A$. To compare the distributions of the cash value $M_\tau+Q_\tau(S_\tau-\chi Q_\tau)$, we run 100 different realizations of process $S_t$ and, by calculating the trading rate for each realization, get the agent's final wealth. We assume that the trader executes orders at equally spaced moment in the interval $[0,T]$. In particular, we consider 100 trades, occurring every $T/100$. The data used for numerical tests are the following: $a_0=6, \ \epsilon_0=0, \ q_0=120, \ T=1, \ \sigma_1=0.3, \ \sigma_2=0.05, \ \mu_1=0, \ \rho=0.5, \ k=0.1, \ \eta=0.01, \ \chi=0.007, \ \phi_1=\phi_2=\phi_3=0.07$. Similar numbers are used in Cartea et al.~\cite{Cartea}. These parameters satisfy condition $\eqref{eqq51}$.

Table \ref{table1} summarizes the key statistics of agent's final wealth using the three different strategies.
\restylefloat{table}
\begin{table}[H]
\begin{center}
\begin{tabular}{ |c|c|c|c|c| } 
 \hline
 Strategy based on & Exp. Val. & St. Dev. & \nth{5} Perc. & \nth{95} Perc. \\
  \hline
Price $S$ & 723.3 & 79.8 & 598.2 & 861.8\\
\hline
&&&&
\\[-1em]
GBM approx. $\tilde{S}$ & 718.8 (-0.6\%) & 95.2 (19.3\%) & 567.3 (-3.5\%)  & 877.3 (1.8\%)\\
 \hline
Stock $A$ & 718.3 (-0.7\%) & 95.8 (20.1\%) & 561.6(-6.1\%) & 877.8 (1.9\%)\\
 \hline
\end{tabular}
\caption{Key statistics of agent's final wealth based on simulations with different optimal strategies. Percentages in brackets represent the discrepancies with respect to the strategy $c^*$ based on stock price $S$.}
\label{table1}
\end{center}
\end{table}

Table \ref{table1} shows that the strategy $c^*$ has the best performance in producing the highest expected value and the lowest standard deviation for agent's final wealth, which indicates using the information of both stocks is highly useful in increasing the final wealth and reducing the risk. The strategy $c^*$ is also the one that guarantees the highest final wealth with 95\% confidence.

Table \ref{table2} summarizes the key statistics of agent's final wealth with change of one parameter while all other parameters are kept the same. In particular, we compare the performance for different correlation coefficient $\rho$, penalty coefficients $\phi_i$, and volatility $\sigma_2$. Table \ref{table2} shows again that strategy using the information of two stocks outperforms those using only one stock.

\begin{table}[ht]
\begin{center}
\begin{tabular}{ |c|c|c|c|c|c| } 
 \hline
Param. Choice & Strategy based on & Exp. Val. & St. Dev. & \nth{5} Perc. & \nth{95} Perc.\\
 \hline
\multirow{3}{*}{$\rho=0$} &Price $S$ & 713.3 & 68.4 & 619.0 & 824.0 \\\cline{2-6}
&&&&&
\\[-1em]
&GBM approx. $\tilde{S}$ & 711.2 & 91.6 & 583.8 & 866.5 \\\cline{2-6}

&Stock $A$ & 710.7 & 95.6 & 579.7 & 883.9 \\
 \hline
\multirow{3}{*}{$\rho=-0.5$} &Price $S$ & 718.2 & 73.6 & 610.8 & 850.4 \\\cline{2-6}
&&&&&
\\[-1em]
&GBM approx. $\tilde{S}$ & 712.7 & 93.2 & 579.3 & 877.1\\\cline{2-6}

&Stock $A$ & 713.0 & 102.8 & 568.5 & 880.4 \\
\hline
\multirow{3}{*}{\shortstack[c]{$\phi_1=\phi_2=\phi_3$\\$=0.05$}} &Price $S$ & 724.9 & 105.4 & 586.4 & 931.9  \\\cline{2-6}
&&&&&
\\[-1em]
&GBM approx. $\tilde{S}$ & 720.9 & 110.3 & 541.2 & 982.9 \\\cline{2-6}

&Stock $A$ & 714.9 & 115.1 & 539.4 & 986.9\\
\hline
\multirow{3}{*}{\shortstack[c]{$\phi_1=\phi_2=\phi_3$\\$=0.09$}} &Price $S$ & 715.7 & 90.8 & 586.0 & 857.7 \\\cline{2-6}
&&&&&
\\[-1em]
&GBM approx. $\tilde{S}$ & 715.1 & 109.3 & 548.9 & 887.0 \\\cline{2-6}

&Stock $A$ & 714.9 & 113.9 & 541.7 & 897.8 \\
\hline
\multirow{3}{*}{$\sigma_2=0.04$} &Price $S$ & 709.0 & 85.0 & 579.6 & 854.9 \\\cline{2-6}
&&&&&
\\[-1em]
&GBM approx. $\tilde{S}$ & 708.0 & 115.2 & 549.8 & 921.7\\\cline{2-6}
&Stock $A$ & 707.5 & 116.2 & 542.7 & 923.7 \\
\hline
\multirow{3}{*}{$\sigma_2=0.06$} &Price $S$ & 729.5 & 87.7 & 599.5 & 881.8  \\\cline{2-6}
&&&&&
\\[-1em]
&GBM approx. $\tilde{S}$ & 726.9 & 112.9 & 564.5 & 928.2  \\\cline{2-6}
&Stock $A$ & 725.7 & 113.7 & 560.5 & 924.6 \\
\hline
\end{tabular}
\caption{Sensitivity analysis to model parameters, by slightly modifying parameters.}
\label{table2}
\end{center}
\end{table}

We also observe that for increasing values of penalty parameters $\phi_i$, we get decreasing expected value and standard deviation of terminal wealth. This is due to the urgency of liquidation introduced by these penalizations, which implies that when trader's risk aversion is higher, the optimal strategy concentrates the liquidation on the initial part of period $[0,T]$, leading to a less volatile but lower expected final wealth.

The opposite behavior can be inferred from different choices of volatility $\sigma_2$. The higher the volatility of cointegration process, the lower the expected final wealth (and the higher the standard deviation).

We also perform a robustness test on three strategies by randomly choosing volatilities $\sigma_1$ and $\sigma_2$ from uniform distributions in which $\sigma_1\in [0.25,0.35]$ and $\sigma_2\in[0.04,0.06]$. We run 300 different simulations of stock price $S$.
\begin{table}[H]
\begin{center}
\begin{tabular}{ |c|c|c|c|c| } 
 \hline
 Strategy based on & Exp. Val. & St. Dev. & \nth{5} Perc. & \nth{95} Perc. \\
  \hline
Price $S$ & 712.7 & 90.6 & 566.2 & 892.2\\
\hline
&&&&
\\[-1em]
GBM approx. $\tilde{S}$ & 711.9 & 117.5 & 558.2 & 982.1 \\
 \hline
Stock $A$ & 709.4 & 125.2 & 545.6 & 991.8\\
 \hline
\end{tabular}
\caption{Robustness test for uniformly randomly chosen values of $\sigma_1\in [0.25,0.35]$ and $\sigma_2\in[0.04,0.06]$}
\label{table3}
\end{center}
\end{table}
Table \ref{table3} shows the conclusions are largely the same as those in Tables \ref{table1} and \ref{table2}.

\section{Conclusion}
\label{sectionconclusion}
We have proved that the value function is the unique viscosity solution of the HJB equation associated to our model, that, under some mild conditions, the HJB equation admits a semi-closed integral representation which makes the calculation for agent's optimal liquidation rate easy and fast. However, the solution of the HJB equation does not always coincide with the value function. We attacked the problem from another perspective, using stochastic maximum principle to solve it. Numerical tests show that the approximate solution of the FBSDE is close to the solution of the HJB equation. This fact increases our confidence in considering the solution of the HJB equation and the trading speed found in Section \ref{sectionclass} respectively equal to the value function and the optimal trading speed of the problem. Numerical tests show that, independent of market conditions, our strategy based on two stock prices outperforms other single stock strategies and approximations with the highest expected final wealth and the lowest standard deviation, is as robust as other strategies known in the literature, based on a single stock.

\appendix
\section{Proofs}
\label{sectionproof}
We first introduce some notations and relations that are used in the proofs. Denote by $\x:=(a, \epsilon, q)$ and 
\begin{align}
\nonumber
\mathcal{G}_1(\x;c)&:= a e^{\epsilon}c-\eta c^2,\\
\label{defGandS2}
\mathcal{G}_2(\x)&:=-\phi_1 q^2-\phi_2 qae^{\epsilon}-\phi_3 qa,\\
\nonumber
\mathcal{S}(\x)&:=q\left(ae^{\epsilon}-\chi q \right).
\end{align}
The objective function $\eqref{1eqq15}$ can be written as 
\begin{equation*}
v^{c}(t,\x):= \mathbb{E}_t\bigg[\int_t^\tau \mathcal{G}_1(\X_r^{t,\x};c_r)dr +\int_t^\tau\mathcal{G}_2(\X_r^{t,\x})dr + \mathcal{S}(\X_\tau^{t,\x})  \bigg].
\end{equation*}
Denote by $\X_r^{t,\x}:=(A_r^{t,\x}, \varepsilon_r^{t,\x}, Q_r^{t,\x})$, the solution of  (\ref{Awritten}), (\ref{epsilonwritten}), and (\ref{qwritten}) with the initial condition $\X_t=\x\in \mathcal{O}$ and square integrable feasible control  $c\in \mathcal{A}$ and $t\in[0,T]$. We omit the superscript in $X^{t,\x}$ and we denote it as $X$ when the initial conditions are clear from the context.
Using the standard stochastic analysis for stochastic differential equations,   we have (see, e.g., Krylov~\cite{Kry}) 
\begin{equation}
\label{eqq3.5gen}
\lim_{h\searrow 0^+} \E_t \left[ \sup_{r\in[t,t+h]} \left|\X_r^{t,\x}-\mathbf{x} \right|^2\right]=0.
\end{equation}

\begin{lemma}
\label{lemmaonboundexpeps}
Let $p>0$ be a constant and  $t\in[0,T]$, then there exists a constant $C>0$ such that for any $\x\in\mathcal{O}$
\begin{equation}
\label{eqqboundexp1}
\E_t\left[ \sup_{r\in[t,T]} e^{p\varepsilon_r^{t,\x}}\right]\le Ce^{p|\epsilon|}
\end{equation}
and for any $r\in[t,T]$
\begin{equation}
\label{eqqboundexp2}
\E_t\left[e^{ p\left(\varepsilon_{r}^{t,\x}-\epsilon\right)}\right]\le Ce^{p|\epsilon|\left(1-e^{-k(r-t)}\right)} .
\end{equation}
\end{lemma}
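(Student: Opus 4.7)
The plan is to exploit the Gaussianity of the Ornstein--Uhlenbeck process explicitly. Introducing the single standard Brownian motion $\tilde W_r:=\rho W^1_r+\sqrt{1-\rho^2}\,W^2_r$, the SDE reduces to $d\varepsilon_r=-k\varepsilon_r\,dr+\sigma_2\,d\tilde W_r$, whose explicit solution is
\begin{equation*}
\varepsilon_r^{t,\x}=\epsilon\, e^{-k(r-t)}+L_r,\qquad L_r:=\sigma_2\int_t^r e^{-k(r-u)}\,d\tilde W_u.
\end{equation*}
Then $L_r$ is centered Gaussian with variance $v(r)^2=\frac{\sigma_2^2}{2k}(1-e^{-2k(r-t)})\le\frac{\sigma_2^2}{2k}$, and $(L_r)_{r\in[t,T]}$ is independent of $\mathcal F_t$, so conditional and unconditional expectations agree throughout.

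For \eqref{eqqboundexp2}, since $\varepsilon_r^{t,\x}-\epsilon$ is Gaussian with mean $\epsilon(e^{-k(r-t)}-1)$ and variance $v(r)^2$, I would compute the moment generating function directly,
\begin{equation*}
\E_t\!\left[e^{p(\varepsilon_r-\epsilon)}\right]=\exp\!\left(p\epsilon(e^{-k(r-t)}-1)+\tfrac{p^2}{2}v(r)^2\right),
\end{equation*}
and then observe that $p\epsilon(e^{-k(r-t)}-1)=-p\epsilon(1-e^{-k(r-t)})\le p|\epsilon|(1-e^{-k(r-t)})$ for every sign of $\epsilon$, since $1-e^{-k(r-t)}\ge 0$. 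Combining this with $\tfrac12 p^2 v(r)^2\le p^2\sigma_2^2/(4k)$ yields \eqref{eqqboundexp2} with the constant $C=\exp(p^2\sigma_2^2/(4k))$.

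For \eqref{eqqboundexp1}, the deterministic drift satisfies $p\epsilon e^{-k(r-t)}\le p|\epsilon|$ uniformly in $r\in[t,T]$, so $\sup_{r\in[t,T]}e^{p\varepsilon_r^{t,\x}}\le e^{p|\epsilon|}\sup_{r\in[t,T]}e^{pL_r}$, and the remaining task is to bound $\E[\sup_{r\in[t,T]}e^{pL_r}]$ by a constant independent of $t$ and $\epsilon$. I would factor $L_r=\sigma_2 e^{-kr}N_r$, where $N_r:=\int_t^r e^{ku}\,d\tilde W_u$ is a continuous martingale with $\langle N\rangle_r=(e^{2kr}-e^{2kt})/(2k)$. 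Since $e^{-kr}\le e^{-kt}$ on $[t,T]$, the pathwise inequality $L_r\le \sigma_2 e^{-kt}|N_r|$ holds, so setting $c:=p\sigma_2 e^{-kt}$,
\begin{equation*}
\sup_{r\in[t,T]}e^{pL_r}\le \sup_{r\in[t,T]}e^{c|N_r|}\le \sup_{r\in[t,T]}e^{cN_r}+\sup_{r\in[t,T]}e^{-cN_r}.
\end{equation*}
Each process $e^{\pm cN_r}$ is a nonnegative continuous submartingale, so Doob's $L^2$ maximal inequality combined with the Gaussian MGF identity $\E[e^{2cN_T}]=\exp(2c^2\langle N\rangle_T)$ and Cauchy--Schwarz deliver $\E[\sup_r e^{\pm cN_r}]\le 2\exp(c^2\langle N\rangle_T)\le 2\exp(p^2\sigma_2^2 e^{2kT}/(2k))$, uniformly in $t\in[0,T]$. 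Combining these estimates proves \eqref{eqqboundexp1}.

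The main obstacle is that $\varepsilon_r$ is not itself a martingale, which prevents any direct application of Doob's inequality to $e^{p\varepsilon_r}$; the key trick is to separate out the deterministic mean-reverting drift (which contributes exactly the single factor $e^{p|\epsilon|}$) and then peel off the $e^{-kr}$ weight inside $L_r$ to expose the underlying continuous martingale $N_r$, on which the $L^2$ maximal inequality applies cleanly.
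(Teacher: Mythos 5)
Your proof is correct, and it follows the same skeleton as the paper's: write the OU process explicitly as $\varepsilon_r^{t,\x}=\epsilon e^{-k(r-t)}+\sigma_2\int_t^r e^{-k(r-u)}d\tilde W_u$, peel off the deterministic part to produce the factor $e^{p|\epsilon|}$, and reduce everything to a uniform bound on the supremum of the exponential of the stochastic integral. The difference lies in the technical core. The paper handles the time-dependent weight by the inequality $e^{\kappa x}\le e^{x}+1$ for $\kappa\in[0,1]$ (applied with $\kappa=e^{-kr}$), and then bounds $\E\bigl[\sup_{r}e^{p\sigma_2 M_r^t}\bigr]$ by writing the SDE satisfied by $N_r^t=e^{p\sigma_2 M_r^t}$ and invoking Krylov's moment estimate \cite[Corollary 2.6.12]{Kry} as a black box; it reuses the same device to obtain \eqref{eqqboundexp2}. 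You instead dominate the weight via $e^{-kr}\le e^{-kt}$ and $e^{c|N_r|}\le e^{cN_r}+e^{-cN_r}$, then apply Doob's $L^2$ maximal inequality to the nonnegative continuous submartingales $e^{\pm cN_r}$ together with the Gaussian moment generating function, and for \eqref{eqqboundexp2} you simply compute the MGF of the Gaussian variable $\varepsilon_r^{t,\x}-\epsilon$ exactly. Your route is more elementary and self-contained (no SDE manipulation, no external citation) and yields explicit constants that are visibly uniform in $t$ and $\epsilon$; the paper's route is shorter on the page but leans on a nontrivial reference. One cosmetic remark: your pathwise bound $L_r\le\sigma_2 e^{-kt}|N_r|$ silently uses that $L_r$ can be negative when $N_r<0$, which is fine but worth stating; otherwise every step checks out.
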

\begin{proof}
Process $\varepsilon_r$, defined in $\eqref{epsilonwritten}$, has explicit formulation
$$\varepsilon_r^{t,\x} =\epsilon e^{-k(r-t)}+\sigma_2 \int_t^r e^{-k(r-s)}\left(\rho dW^1_s+\sqrt{1-\rho^2} dW^2_s\right)=:\epsilon e^{-k(r-t)}+\sigma_2 e^{-kr} M_r^t.$$
Using Ito's formula, we have the process $N_r^t:=e^{p\sigma_2 M_r^t}$ satisfies the following SDE
$$
dN_r^t=\frac{p^2 \sigma_2^2}{2} e^{2kr} N_r^t dr + p\sigma_2 e^{kr} N_r^t \left(\rho dW^1_r+\sqrt{1-\rho^2} dW^2_r\right), \quad N_t^t=1. $$
The  SDE above satisfies conditions of Krylov \cite[Corollary 2.6.12]{Kry}, then there exists a constant $C>0$ such that
$\E\left[ \sup_{r\in[t,T]} e^{p\sigma_2M_r^t}\right]\le \E\left[ \sup_{r\in[t,T]} |N_r^t|\right]\le \E\left[ \sup_{r\in[0,T]} |N_r^0|\right]\le C$.
Using that for any $\kappa \in[0,1]$, $e^{\kappa x}\le e^x+1$ on $\R$, we get $\eqref{eqqboundexp1}$:
$$\E_t\left[ \sup_{r\in[t,T]} e^{p \varepsilon_r^{t,\x}}\right]\le \E_t\left[ \sup_{r\in[t,T]} e^{p\epsilon e^{-k(r-t)}}\left(e^{p\sigma_2 M_r^t}+1\right)\right]\le e^{p|\epsilon|}\left(\E_t\left[ \sup_{r\in[t,T]} e^{p \sigma_2M_r^t}\right]+1\right)\le (C+1)e^{p|\epsilon|} .$$
Finally, we apply similar arguments to get $\eqref{eqqboundexp2}$:
$$\E_t\left[e^{p\left(\varepsilon_{r}^{t,\x}-\epsilon\right)}\right]\le \E_t\left[ e^{p\epsilon \left(e^{-k(r-t)}-1\right)}\left(e^{p\sigma_2 M_r^t}+1\right)\right]\le (C+1)e^{p|\epsilon|\left(1-e^{-k(r-t)}\right)} .$$
\end{proof}

\begin{lemma}
\label{lemmaboundsupS}
Let $t\in[0,T]$ and $\mathbf{x}, \mathbf{x'}\in \mathcal{O}$. There exists $C_p>0$, independent of $t$, such that
\begin{equation}
\label{boundSexp}
\E_t\left[\sup_{r\in[t,T]}\left|S_{r}^{t,\x}-S_{r}^{t,\x'}\right|^p \right] \le C_p e^{p|\epsilon'|} \left(|a-a'|^p+a^p\left|e^{\epsilon-\epsilon'}-1\right|^p\right).
\end{equation}
\end{lemma}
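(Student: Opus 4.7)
The strategy is to exploit the linear structure of the SDEs for $A$ and $\varepsilon$: neither depends on $q$, $A$ is multiplicatively linear in its initial datum $a$, and $\varepsilon$ is affine in $\epsilon$. First I would write, for $r\in[t,T]$,
\begin{equation*}
A_r^{t,\x}=a\,\tilde A_r^{t},\qquad \varepsilon_r^{t,\x}=\epsilon\, e^{-k(r-t)}+\tilde\varepsilon_r^{t},
\end{equation*}
where $\tilde A_r^{t}$ is the GBM starting at $1$ at time $t$ and $\tilde\varepsilon_r^{t}=\sigma_2\int_t^{r}e^{-k(r-s)}(\rho\,dW^1_s+\sqrt{1-\rho^2}\,dW^2_s)$, both independent of $(a,\epsilon,q)$. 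Consequently
\begin{equation*}
S_r^{t,\x}=a\,\tilde A_r^{t}\,\exp\!\bigl(\lambda_r\epsilon+\tilde\varepsilon_r^{t}\bigr),\qquad \lambda_r:=e^{-k(r-t)}\in(0,1].
\end{equation*}

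Second, I would factor out the common stochastic prefactor $\tilde A_r^{t}\exp(\tilde\varepsilon_r^{t})$ and then add and subtract $a\,e^{\lambda_r\epsilon'}$ inside the brackets to obtain
\begin{equation*}
S_r^{t,\x}-S_r^{t,\x'}=\tilde A_r^{t}\,e^{\tilde\varepsilon_r^{t}}\,e^{\lambda_r\epsilon'}\Bigl[a\bigl(e^{\lambda_r(\epsilon-\epsilon')}-1\bigr)+(a-a')\Bigr].
\end{equation*}
The point of this decomposition is that the deterministic bracket isolates the two natural differences $|a-a'|$ and $|e^{\epsilon-\epsilon'}-1|$ that appear on the right-hand side of \eqref{boundSexp}. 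Two elementary inequalities, valid for $\lambda\in[0,1]$, transform the $\lambda_r$-dependent expressions into $r$-free bounds: $e^{\lambda\epsilon'}\le e^{|\epsilon'|}$, and $|e^{\lambda x}-1|\le|e^{x}-1|$ for all $x\in\R$ (proved by a two-case argument on the sign of $x$, using that $\lambda x$ lies between $0$ and $x$).

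Third, applying $|u+v|^p\le 2^{p-1}(|u|^p+|v|^p)$, taking the supremum over $r\in[t,T]$ and the conditional expectation $\E_t$, the deterministic factor $e^{p|\epsilon'|}\bigl(a^{p}|e^{\epsilon-\epsilon'}-1|^{p}+|a-a'|^{p}\bigr)$ pulls out of the expectation and it remains to prove that
\begin{equation*}
\E_t\!\left[\sup_{r\in[t,T]}(\tilde A_r^{t})^{p}\,e^{p\tilde\varepsilon_r^{t}}\right]\le C_p
\end{equation*}
for a constant $C_p$ depending only on $p$, $T$ and the model parameters (in particular, not on $t$, $\x$, $\x'$). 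This I would handle by Cauchy-Schwarz, splitting the expectation into $(\E_t[\sup(\tilde A_r^{t})^{2p}])^{1/2}$ and $(\E_t[\sup e^{2p\tilde\varepsilon_r^{t}}])^{1/2}$; the first factor is bounded by standard GBM moment estimates (Doob's inequality applied as in Krylov~\cite[Corollary 2.6.12]{Kry}), and the second is exactly the content of Lemma~\ref{lemmaonboundexpeps} with initial condition $\epsilon=0$.

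The main obstacle is step two: producing the algebraic decomposition that yields precisely the form $e^{p|\epsilon'|}(a^{p}|e^{\epsilon-\epsilon'}-1|^{p}+|a-a'|^{p})$ on the right-hand side, rather than a cruder local Lipschitz bound with $\x$-dependent constants. The key observation that makes this work is the contraction property $|e^{\lambda x}-1|\le |e^{x}-1|$ for $\lambda\in[0,1]$, which lets the $r$-dependent factor $\lambda_r=e^{-k(r-t)}$ be removed while keeping the right-hand side sharp in $\epsilon-\epsilon'$. Once this step is in place, the remaining stochastic estimates are routine and essentially identical to those already established for the preceding lemma.
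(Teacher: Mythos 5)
Your proof is correct and follows essentially the same route as the paper: both rest on the explicit affine dependence of $A_r^{t,\x}$ and $\varepsilon_r^{t,\x}$ on the initial data, the contraction inequality $|e^{\kappa x}-1|\le|e^{x}-1|$ for $\kappa\in[0,1]$, a Cauchy--Schwarz split, and the uniform moment bounds of Lemma~\ref{lemmaonboundexpeps} and the GBM estimates. The only cosmetic difference is that you factor out the common noise first so the bracket is deterministic, whereas the paper does a cross-term triangle inequality on $Ae^{\varepsilon}-A'e^{\varepsilon'}$ and then estimates each piece; the ingredients and the resulting bound are identical.
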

\begin{proof}
Using results on GBM (cf. \cite[Theorem 1.3.15]{Pham} and \cite[Corollary 2.6.12]{Kry}) and using $\eqref{eqqboundexp1}$, we have that, for any $p\ge 1$, there exists a constant $C_p>0$ independent of $t$ and $\x$ such that 
\begin{equation}
\label{eqqGBMbound}
\E_t\left[\sup_{r\in[t,T]} \left(A_r^{t,\x}\right)^p \right]<C_p a^p, \qquad \E_t\left[\sup_{r\in[t,T]} e^{p\varepsilon_r^{t,\x}}\right]<C_p e^{p |\epsilon|}.
\end{equation}
Hence, for any $p\ge 1$ there exists $C_p>0$, independent of $t$, so that
\begin{align*}
\E_t\left[\sup_{r\in[t,T]}\left|S_{r}^{t,\x}-S_{r}^{t,\x'}\right|^p\right] &\le 2^p\left(\E_t\left[\sup_{r\in[t,T]} \left(A_{r}^{t,\x}\right)^{2p}\right]\right)^{\frac 12}\left(\E_t\left[\sup_{r\in[t,T]}\left|e^{\varepsilon_{r}^{t,\x}}-e^{\varepsilon_{r}^{t,\x'}}\right|^{2p}\right]\right)^{\frac 12}\\
&\qquad+2^p\left(\E_t\left[\sup_{r\in[t,T]} e^{2p\varepsilon_{r}^{t,\x'}}\right]\right)^{\frac 12}\left(\E_t\left[\sup_{r\in[t,T]}\left|A_{r}^{t,\x}-A_{r}^{t,\x'}\right|^{2p}\right]\right)^{\frac 12}\\
& \le C_p\left(a^p\left(\E_t\left[\sup_{r\in[t,T]}\left|e^{\varepsilon_{r}^{t,\x}}-e^{\varepsilon_{r}^{t,\x'}}\right|^{2p}\right]\right)^{\frac 12}+e^{p|\epsilon'|}\left(\E_t\left[\sup_{r\in[t,T]}\left|A_{r}^{t,\x}-A_{r}^{t,\x'}\right|^{2p}\right]\right)^{\frac 12}\right).
\end{align*}
The explicit formulations for processes $A_r$ and $\varepsilon_r$ give
\begin{align}
\label{boundonlipsA}
\left|A_{r}^{t,\x}-A_{r}^{t,\x'}\right|&= |a-a'|e^{\left(\mu_1-\frac{\sigma_1^2}2\right)(r-t)+\sigma_1W^1_{r-t}},\\
\nonumber
\left|e^{\varepsilon_{r}^{t,\x}}-e^{\varepsilon_{r}^{t,\x'}}\right|&=e^{\varepsilon_{r}^{t,\x'}}\left| e^{(\epsilon-\epsilon')e^{-k(r-t)}} -1\right|\le e^{\varepsilon_{r}^{t,\x'}}\left| e^{\epsilon-\epsilon'} -1\right|.
\end{align}
Here we have used the fact that for $\kappa\in[0,1]$, $\left|e^{\kappa x}-1\right|\le |e^x -1|$ on $\R$. Using similar argument as in $\eqref{eqqGBMbound}$, we get that there exists $C_p>0$ such that
\begin{align*}
\left(\E_t\left[\sup_{r\in[t,T]}\left|A_{r}^{t,\x}-A_{r}^{t,\x'}\right|^{2p}\right]\right)^{\frac 12}&\le C_p|a-a'|^p,\\
\left(\E_t\left[\sup_{r\in[t,T]}\left|e^{\varepsilon_{r}^{t,\x}}-e^{\varepsilon_{r}^{t,\x'}}\right|^{2p}\right]\right)^{\frac 12}&\le C_pe^{p|\epsilon'|}\left| e^{\epsilon-\epsilon'} -1\right|^p.
\end{align*}
\end{proof}

\subsection{Proof of Theorem \ref{vertheo}}
We follow the proof of Verification Theorem Pham~\cite[Theorem 3.5.2]{Pham} to show that $v$ and $w$ coincide on $[0,T]\times \mathcal{O}$. The two main differences between our setting and Pham's setting are that solution $w$ does not satisfy a quadratic growth condition and the presence of a stopping time $\tau$ in the definition of value function in our case. We define sequence of stopping time $\tau_n$ similarly as in \cite[proof of Theorem 3.5.2]{Pham}, by capping it with the stopping time $\tau$:
$$\tau_n:=\tau \wedge\inf_{s\ge t}\left\{ \int_t^s |\nabla_{\x}w(r,\X_r)'\sigma(\X_r)|^2\;dr\ge n \right\}.$$
We notice that $\tau_n\nearrow \tau$ and the stopped process $(\int_t^{s\wedge \tau_n} \nabla_{\x}w(r,\X_r)'\sigma(\X_r)\;dr)_{s\in[t,T]}$ is a martingale. Let $c\in \mathcal{A}$ be fixed. By taking the expectation of the Ito's representation of $w(s, \X_{s})$, we get
\begin{align}
\label{eqqq20}
\E_t\left[w(s\wedge\tau_n, \X_{s\wedge \tau_n})\right]=w(t,\mathbf{x})+\E_t&\bigg[\int_t^{s\wedge \tau_n} \bigg(\frac{\partial w}{\partial t}(r,\X_r)+\mathcal{L}w(r,\X_r) -c\frac{\partial w}{\partial q}(r,\X_r)  \bigg) dr \bigg].
\end{align}
Since $w$ is a solution to the HJB equation $\eqref{1eqq3tris3}$, for a general $c\in \mathcal{A}$
\begin{equation*}
\frac{\partial w}{\partial t}(r,\X_r) +\mathcal{L}w(r,\X_r)+\mathcal{G}_1(\X_r;c_r) +\mathcal{G}_2(\X_r) -c_r\frac{\partial w}{\partial q}(r,\X_r) \le 0
\end{equation*}
and applying it to $\eqref{eqqq20}$, we get
\begin{align}
\label{eqqq19}
&\E_t\left[w(s\wedge\tau_n, \X_{s\wedge \tau_n})\right]\le w(t,\mathbf{x})-\E_t\left[\int_t^{s\wedge \tau_n} \left(\mathcal{G}_1(\X_r;c_r)  +\mathcal{G}_2(\X_r)\right) dr \right].
\end{align}
We apply the dominated convergence theorem to previous inequality. Both sides are bounded by an integrable process independent on $n$. By using boundedness of process $(Q_r)_{r\in[0,T]}$ and Holder inequality, we have that
\begin{align*}
&\E_t\left[\left|\int_t^{s\wedge \tau_n} \left(\mathcal{G}_1(\X_r;c_r) +\mathcal{G}_2(\X_r)\right) dr \right| \right] \le \left(\E_t\left[\int_t^{T} A_r^2 e^{2\varepsilon_r}\; dr \right]\right)^{\frac 12} \left( \E_t\left[\int_t^{T} c_r^2\; dr \right]\right)^{\frac 12} \\
&\qquad\qquad+\eta\E_t\left[\int_t^{T} c_r^2\; dr \right]+\phi_1 q^2+\phi_2 q\E_t\left[\int_t^{T} A_r e^{\varepsilon_r}\; dr \right] +\phi_3 q \E_t\left[\int_t^{T} A_r\; dr \right],
\end{align*}
which is bounded independently on $n$, using $\eqref{eqqGBMbound}$ and square integrability of control process $(c_r)_{r\in[t,T]}$. By $\eqref{eqqGBMbound}$, Holder inequality, growth condition on $w$ and recalling that $Q_r$ is bounded, we conclude that for any $(t,\x)\in[0,T]\times \mathcal{O}$, there exists $C>0$ independent of $n$ such that 
$$\E_t\left[w(s\wedge \tau_n, \X_{s\wedge \tau_n})\right]\le C(1+q^2)(1+a^{p_1})(1+e^{p_2\epsilon}),$$
We apply the dominated convergence theorem to $\eqref{eqqq19}$ by sending $n\to \infty$:
\begin{align*}
\E_t\left[w(s\wedge \tau, \X_{s\wedge \tau})\right]\le w(t,\mathbf{x})-\E_t\left[\int_t^{s\wedge \tau} \left(\mathcal{G}_1(\X_r;c_r)  +\mathcal{G}_2(\X_r)\right) dr \right].
\end{align*}
Since $w$ is continuous on $[0,T]\times \mathcal{O}$, by sending $s$ to $T$, we obtain by the dominated convergence theorem:
\begin{equation*}
\E_t\left[w(\tau, \X_{ \tau})\right]\le w(t,\mathbf{x})-\E_t\left[\int_t^{\tau} \left(\mathcal{G}_1(\X_r;c_r)  +\mathcal{G}_2(\X_r)\right) dr \right].
\end{equation*}
By terminal and boundary condition of HJB equation $\eqref{1eqq3tris3}$, we know that $w(\tau,\mathbf{x})=\mathcal{S}(\x)$, so we have
\begin{align*}
\E_t\left[\mathcal{S}(\X_{ \tau})\right]\le w(t,\mathbf{x})-\E_t\left[\int_t^{\tau} \left(\mathcal{G}_1(\X_r;c_r)  +\mathcal{G}_2(\X_r)\right) dr \right],
\end{align*}
which implies that
\begin{align*}
w(t,\mathbf{x})\ge \E_t\left[\mathcal{S}(\X_{ \tau})+\int_t^{\tau} \left(\mathcal{G}_1(\X_r;c_r)  +\mathcal{G}_2(\X_r)\right) dr \right]=v^{c}(t,\x).
\end{align*}
From arbitrariness of $c\in\mathcal{A}$, it follows that $v\le w$ on $[0,T]\times \mathcal{O}$.

To prove that $v\ge w$ on $[0,T]\times \mathcal{O}$, we proceed as before, by getting a similar version of $\eqref{eqqq20}$ in which the control process $c_r$ is substituted by the optimal control $c^*(r,\X_r)$:
\begin{equation*}
\E_t\left[w(s\wedge\tau_n, \X_{s\wedge \tau_n})\right]=w(t,\mathbf{x})+\E_t\left[\int_t^{s\wedge \tau_n} \left(\frac{\partial w}{\partial t}(r,\X_r) +\mathcal{L}w(r,\X_r)-c^*(r,\X_r)\frac{\partial w}{\partial q}(r,\X_r)\right) dr \right].
\end{equation*}
By applying optimality of $c^*$, we get
\begin{align*}
\E_t\left[w(s\wedge\tau_n, \X_{s\wedge \tau_n})\right]=w(t,\mathbf{x})+\E_t\left[\int_t^{s\wedge \tau_n} \left(\mathcal{G}_1(\X_r;c^*(r,\X_r))+\mathcal{G}_2(\X_r)\right) dr \right].
\end{align*}
Proceeding as before, we apply dominated convergence theorem to both sides of previous expression. By sending $n\to \infty$ and then sending $s$ to $T$, we get
\begin{align*}
\E_t\left[w(\tau, \X_{\tau})\right]=w(t,\mathbf{x})-\E_t\left[\int_t^{\tau} \left(\mathcal{G}_1(\X_r;c^*(r,\X_r))+\mathcal{G}_2(\X_r)\right) dr \right].
\end{align*}
By terminal condition of the HJB equation, $w(\tau,\mathbf{x})=\mathcal{S}(\mathbf{x})$, so we have
\begin{align*}
&w(t,\mathbf{x})=\E_t\bigg[\mathcal{S}(\X_{\tau})+\int_t^{\tau} \left(\mathcal{G}_1(\X_r;c^*(r,\X_r))+\mathcal{G}_2(\X_r)\right) dr \bigg]=v^{c^*}(t,\mathbf{x}).
\end{align*}
This shows that $w(t,\mathbf{x})=v^{c^*}(t,\mathbf{x})\le v(t,\mathbf{x})$ on $[0,T]\times \mathcal{O}$.

\subsection{Proof of Theorem \ref{thmviscsolgen1}}
To prove the result, we first give a technical lemma.
\begin{lemma}[Comparison Principle]
\label{thmcompgen}
Let $U$ (respectively $V$) be an upper semicontinuous viscosity subsolution (resp. lower semicontinuous viscosity supersolution) to the following HJB equation
\begin{equation}
\label{eqq3tris5gen}
-\frac{\partial v}{\partial t}(t,\mathbf{x})-\mathcal{L} v(t,\mathbf{x}) -\sup_{c\ge 0} \left[-c \frac{\partial v}{\partial q}(t,\mathbf{x})+\mathcal{G}_1(\x;c) \right] -\mathcal{G}_2(\x)=0
\end{equation}
for any $(t,\mathbf{x})\in[0,T)\times (0,\infty)\times\R\times (0,\bar{Q}_0)$. Assume there exist $C,\kappa>0$ and $m\in \N$ such that
\begin{equation}
\label{growthcond2gen}
|U(t,\x)|+|V(t,\x)|\le C\left( 1+a^{m} \right)\left(1+e^{\kappa |\epsilon|}\right)
\end{equation}
for any $(t,\x)\in[0,T]\times \mathcal{O}$. If 
\begin{equation}
\label{condtermandbound}
U(T,\cdot)\le V(T,\cdot) \text{ on } \mathcal{O} \text{ and } U(t,a,\epsilon,0)\le V(t,a,\epsilon,0) \text{ for any } (t,a,\epsilon)\in[0,T]\times (0,\infty)\times  \R,
\end{equation}
then $U\le V$ on $[0,T]\times \mathcal{O}$.
\end{lemma}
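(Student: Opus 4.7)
\textbf{Proof proposal for Lemma \ref{thmcompgen} (Comparison Principle).} The plan is to argue by contradiction using the classical doubling-of-variables technique of Crandall--Ishii--Lions, adapted to the unbounded state space in $a$ and $\epsilon$ via a suitable weight function and to the boundary/terminal conditions via the hypotheses in $\eqref{condtermandbound}$. Suppose for contradiction that $M := \sup_{[0,T]\times\mathcal{O}}(U-V) > 0$; my goal is to reach a contradiction with the viscosity inequalities.

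The first step is to perturb $U$ into a \emph{strict} subsolution with rapid decay at infinity. I would choose an auxiliary function of the form
\begin{equation*}
\psi(t,a,\epsilon) = e^{\lambda(T-t)}\bigl[1 + a^{2m} + \cosh(2\kappa\epsilon)\bigr],
\end{equation*}
with $\lambda$ sufficiently large and $m,\kappa$ large enough that $\psi$ strictly dominates the growth in $\eqref{growthcond2gen}$. Because $\psi$ is independent of $q$, the Hamiltonian term $\sup_{c\ge 0}[-c\partial_q v + \mathcal{G}_1(\x;c)]$ is unchanged when I pass from $U$ to $\tilde U := U - \theta\psi$. The mean-reverting term $-k\epsilon\partial_\epsilon$ in $\mathcal{L}$ tames the exponential growth of $\cosh(2\kappa\epsilon)$, and the polynomial part in $a$ produces at most a linear (in $\psi$) contribution from $\mathcal{L}$; by choosing $\lambda$ larger than the resulting constant, I can ensure $-\partial_t\psi - \mathcal{L}\psi \ge \lambda_0 \psi$ for some $\lambda_0 > 0$. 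This makes $\tilde U$ a viscosity subsolution with strict margin $-\theta\lambda_0\psi$. For $\theta > 0$ small enough, $\sup(\tilde U - V) \ge M/2 > 0$, and by the growth of $\psi$ this supremum is attained on a bounded subset of $[0,T]\times\mathcal{O}$.

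Next I apply the doubling of variables. For $n\in\mathbb{N}$, set
\begin{equation*}
\Phi_n(t,s,\x,\y) := \tilde U(t,\x) - V(s,\y) - n\bigl(|t-s|^2 + |\x-\y|^2\bigr),
\end{equation*}
which attains a maximum at some $(t_n,s_n,\x_n,\y_n)$ in the bounded localization region. Standard estimates (see Pham~\cite{Pham}) give $n(|t_n-s_n|^2 + |\x_n-\y_n|^2)\to 0$ and convergence of the max to $\sup(\tilde U-V)$. Using $\eqref{condtermandbound}$: if $t_n = T$ (resp.\ $q$-coordinate $\to 0$) along a subsequence, then by upper semicontinuity of $\tilde U - V$ the limit would satisfy $\tilde U - V \le -\theta\psi < 0$, contradicting $\sup > 0$; so the maximizing points lie in the interior for all large $n$. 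Ishii's lemma then provides semijets $(p_1^t,p_1,X_1)\in\overline{\mathcal{P}}^{2,+}\tilde U(t_n,\x_n)$, $(p_2^t,p_2,X_2)\in\overline{\mathcal{P}}^{2,-}V(s_n,\y_n)$ with $p_1^t - p_2^t = 2n(t_n-s_n)$, $p_1 = p_2 = 2n(\x_n-\y_n)$, and the usual matrix inequality. Writing the two viscosity inequalities for the strict subsolution $\tilde U$ and the supersolution $V$, subtracting, and using that $p_1 = p_2$ so the $q$-gradient components cancel inside the Hamiltonian, I get
\begin{equation*}
\theta\lambda_0\psi(t_n,\x_n) \le \tfrac{1}{2}\mathrm{tr}\bigl(\sigma\sigma^\top(\x_n)X_1 - \sigma\sigma^\top(\y_n)X_2\bigr) + \mu(\x_n)\cdot p_1 - \mu(\y_n)\cdot p_2 + [H(\x_n,p_1)-H(\y_n,p_2)] + [\mathcal{G}_2(\x_n)-\mathcal{G}_2(\y_n)],
\end{equation*}
where $H(\x,p) = \frac{1}{4\eta}(ae^\epsilon - p_q)_+^2$. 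Every term on the right tends to $0$ as $n\to\infty$ (the trace term by the Ishii matrix inequality combined with $n|\x_n-\y_n|^2\to 0$, the drift term since $n|\x_n-\y_n|^2\to 0$ and $\mu$ is linear, the $\mathcal{G}_2$ difference by continuity, and the Hamiltonian difference via local Lipschitz continuity in $(a,\epsilon)$ since $p_{q,1}=p_{q,2}$), while the left-hand side stays bounded below by $\theta\lambda_0\psi(t^*,\x^*)>0$: contradiction.

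The principal technical obstacle will be the quadratic gradient nonlinearity $H(\x,p_q)=\frac{1}{4\eta}(ae^\epsilon-p_q)_+^2$, which is not Lipschitz in $p_q$ uniformly. The symmetric doubling makes $p_{q,1}=p_{q,2}$, so only the $\x$-dependence of $H$ enters the difference and local Lipschitz continuity in $(a,\epsilon)$ (combined with boundedness of the maximizers in the localization region) is enough to kill this term. A second delicate point is ensuring that the weight $\psi$ genuinely dominates the $\sigma_2^2\kappa^2 \psi$ produced by $\partial_{\epsilon\epsilon}\psi$ and the $\mu_1 m a^{2m}$ produced by $\mu_1 a\partial_a\psi$; this is where taking $\lambda$ sufficiently large is essential. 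Once these two points are settled, the rest of the argument is by now routine in the viscosity solutions literature.
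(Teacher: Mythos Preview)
Your overall architecture---penalize by a weight to localize, then run the Crandall--Ishii doubling argument---is the same as the paper's, and your handling of the quadratic Hamiltonian (using that the $q$-components of the two semijets agree, so only the locally Lipschitz $(a,\epsilon)$-dependence of $H$ enters) is correct. But your weight $\psi(t,a,\epsilon)=e^{\lambda(T-t)}\bigl[1+a^{2m}+\cosh(2\kappa\epsilon)\bigr]$ has a genuine gap: it only penalizes $a\to+\infty$ and $|\epsilon|\to\infty$. The domain here is $\mathcal{O}=(0,\infty)\times\R\times[0,\bar Q_0)$, and there are \emph{no} boundary conditions imposed at $a=0$ or $q=\bar Q_0$; the growth bound \eqref{growthcond2gen} is uniform in $q$ and stays finite as $a\to0$. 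So a maximizing sequence for $\tilde U-V$ can have $a_n\to0^+$ or $q_n\to\bar Q_0^-$ while remaining in your ``bounded subset,'' and then no interior maximum exists on which to invoke Ishii's lemma. The paper closes exactly this hole by taking
\[
\varphi(\x)=\frac{1}{a^{2}}-\ln\!\Bigl(\frac{\bar Q_0-q}{\bar Q_0+1}\Bigr)+\bigl(1+p(a)^{2}\bigr)\bigl(1+e^{b\epsilon}+e^{-b\epsilon}\bigr),
\]
so that $\varphi\to+\infty$ on every piece of $\partial\mathcal{O}$ not covered by \eqref{condtermandbound}. Note also that the $q$-barrier must have the right sign: $\partial_q\varphi=1/(\bar Q_0-q)>0$, so $\sup_{c\ge0}[-c\,\partial_q\varphi]=0$ and $\varphi$ is compatible with the (unbounded-in-$c$) Hamiltonian; your $\partial_q\psi=0$ has this property too, but any barrier you add in $q$ must be monotone increasing.

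A secondary stylistic difference: the paper first applies the change $w\mapsto e^{\beta t}w$ to insert a zeroth-order term $\beta w$ in the equation, then perturbs the \emph{supersolution} to $V_\delta=V+\delta\varphi$; after localizing to a compact $\mathcal{O}_\delta$ it simply cites Pham's bounded-domain comparison, and the contradiction is $\beta M_\delta\le0$. You instead perturb the subsolution and carry the doubling computation by hand; either variant is fine once the localization is repaired.
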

\begin{proof}
The Lemma is proved following Pham~\cite[proof of Theorem 4.4.5]{Pham}. The main difference between our statement and Pham~\cite[Theorem 4.4.5]{Pham} is our functions $U$ and $V$ are not polynomially growing and are defined in a subset of $\R^n$ space. We apply the first step in \cite[proof of Theorem 4.4.3]{Pham}, which provides an equivalent formulation for the HJB equation $\eqref{eqq3tris5gen}$. Let $\beta>0$ be specified later, $\bar{U}(t,\x)=e^{\beta t}U(t,\x)$ and $\bar{V}(t,\x)=e^{\beta t}V(t,\x)$, then $\bar{U}$ and $\bar{V}$ are respectively subsolution and supersolution to
\begin{equation}
\label{newHJB}
-\frac{\partial w}{\partial t}(t,\mathbf{x})+\beta w(t,\mathbf{x})-\mathcal{L}w(t,\mathbf{x}) -\sup_{c\ge 0} \left[-c \frac{\partial w}{\partial q}(t,\mathbf{x})+e^{\beta t}\mathcal{G}_1(\x;c) \right] -e^{\beta t}\mathcal{G}_2(\x)=0
\end{equation}
for any $(t,\mathbf{x})\in[0,T)\times \mathcal{O}$. With a slight abuse of notation, in the remaining of the proof, we denote $\bar{U}$, $\bar{V}$ respectively $U$, $V$ and we replace equation $\eqref{eqq3tris5gen}$ with $\eqref{newHJB}$.

We adapt second step of \cite[proof of Theorem 4.4.3]{Pham} to show that there exists a function $\varphi (t,\x)$ such that for any $\delta>0$, $V+\delta \varphi$ is a supersolution to $\eqref{newHJB}$. Define $p(a)=C\left( 1+a^d \right)$, where $d>\max(m,2)$ and $C, m$ are as in $\eqref{growthcond2gen}$. Define for any $\mathbf{x} \in \mathcal{O}$
\begin{align*}
\varphi(\mathbf{x})&=\frac{1}{a^2}-\ln\left(\frac{\bar{Q}_0-q}{\bar{Q}_0+1}\right)+\left(1+p(a)^2\right)\left(1+e^{b \epsilon}+e^{-b \epsilon}\right)
\end{align*}
where $b>\max(\kappa, 2)$ and $\kappa$ is defined in $\eqref{growthcond2gen}$. We observe that $\varphi(\mathbf{x})$ is nonnegative and infinitely many times differentiable on $\mathcal{O}$. An explicit calculation shows that
\begin{align*}
&-\frac{\partial \varphi}{\partial t}(\mathbf{x})+\beta \varphi(\mathbf{x}) -\mathcal{L}\varphi(\mathbf{x})-\sup_{c\ge 0} \left[ -c \frac{\partial \varphi}{\partial q}(\mathbf{x}) \right]\\
&\qquad= \beta\varphi +\frac{2\mu_1-3\sigma_1^2}{a^2}-b\left(e^{b \epsilon}-e^{-b \epsilon}\right)\left(-k\epsilon \left(1+p^2\right)+2p p' \rho\sigma_1\sigma_2 a \right)-\frac{\sigma_2^2}2 b^2 \left(1+p^2\right) \left(e^{b \epsilon}+e^{-b \epsilon}\right) \\
&\qquad\qquad-\left(\sigma_1^2 a^2\left(\left(p'\right)^2 +p p'' \right)+2\mu_1 a pp'\right)\left(1+e^{b \epsilon}+e^{-b \epsilon}\right)-\sup_{c\ge 0} \left[- \frac{c}{\bar{Q}_0-q}\right].
\end{align*}
We observe that $\sup_{c\ge 0} \left[- \frac{c}{\bar{Q}_0-q}\right]$ and there exists a constant $C_1>0$ such that $
a^2\left(\left(p'(a)\right)^2+p(a)p''(a)\right)\le C_1 p(a)^2$ and $
2ap(a)p'(a)\le C_1 p(a)^2$ for any $a\ge 0$. Simple calculus shows that $2\left(e^{b \epsilon}-e^{-b \epsilon}\right)p p' a\le 2\left(e^{b \epsilon}+e^{-b \epsilon}\right)p p' a\le C_1\left(e^{b \epsilon}+e^{-b \epsilon}\right)p^2 \le C_1 \varphi$. Then, we get
\begin{align}
\nonumber
&-\frac{\partial \varphi}{\partial t}(\mathbf{x})+\beta \varphi(\mathbf{x})-\mathcal{L}\varphi(\mathbf{x}) -\sup_{c\ge 0} \left[ -c \frac{\partial \varphi}{\partial q}(\mathbf{x})\right]\\
\nonumber
&\quad\ge \left(\beta-C_1\rho\sigma_1\sigma_2b\right)\varphi +\underbrace{k\epsilon b\left(e^{b \epsilon}-e^{-b \epsilon}\right)\left(1+p^2\right)}_{\ge 0} \\
\nonumber
&\qquad -\bigg[ \frac{\sigma_2^2}2 b^2 \left(1+p^2\right)\left(e^{b \epsilon}+e^{-b \epsilon}\right)+\frac{3\sigma_1^2}{a^2}+C_1\left( \sigma_1^2+\mu_1\right)p^2\left(1+e^{b \epsilon}+e^{-b \epsilon}\right)\bigg]\\
\label{eqset1}
&\quad\ge \left(\beta  -C_1\left(\sigma_1^2+\mu_1+\rho\sigma_1\sigma_2b\right)-\frac{\sigma_2^2}2 b^2-3\sigma_1^2\right)\varphi.
\end{align}
Choosing $\beta>0$ so that $\beta>C_1\left(\sigma_1^2+\mu_1+\rho\sigma_1\sigma_2b\right)+\frac{\sigma_2^2}2 b^2+3\sigma_1^2$, we get that for any $\delta>0$, the function $V_\delta =V+\delta \varphi$ is, as $V$, a supersolution to $\eqref{newHJB}$. Moreover, from definition of $\varphi$, and from growth conditions on $U$, $V$ we have that for $\epsilon\to \pm \infty$ and $a\to +\infty$, $\varphi$ grows more rapidly than $U$ and $V$. For $a\to 0$ and $q\to \bar{Q}_0$, $U$ and $V$ are finite, while $\varphi\to +\infty$. This implies that for any $\delta>0$, there exists an open and bounded set $\mathcal{O}_\delta$ so that $\bar{\mathcal{O}_\delta}\subset \mathcal{O}$ and
\begin{equation}
\label{eqqlimitbound}
\sup_{(t,\mathbf{x})\in [0,T]\times \mathcal{O}} (U-V_\delta)(t,\mathbf{x})=\max_{(t,\mathbf{x})\in [0,T]\times\left\{\mathbf{x}\in \R^3| \mathbf{x} \in \mathcal{O}_\delta \text{ or } q=0\right\}} (U-V_\delta)(t,\mathbf{x}) .
\end{equation}
To conclude the proof of the Lemma, we need to show that 
\begin{equation}
\label{toproveinlemma}
\forall \delta>0, \ \sup_{(t,\mathbf{x})\in [0,T]\times \mathcal{O}} (U-V_\delta)(t,\mathbf{x})\le 0.
\end{equation}
However, using $\eqref{condtermandbound}$, upper semicontinuity of $U$, lower semicontinuity of $V$ and that $\varphi(\cdot,\cdot)\ge 1$, we get that 
\begin{align}
\label{eqq63}
&\forall \delta>0 \text{ exists } \gamma>0 \text{ s.t. } (U-V_\delta)(t,\x)<0 \text{ when } t\in (T-\gamma,T] \text{ and}\\
\label{eqq63bis}
&\forall \delta>0 \text{ exists } \gamma>0 \text{ s.t. } (U-V_\delta)(t,\x)<0 \text{ when } q<\gamma.
\end{align}
By applying $\eqref{eqqlimitbound}$, $\eqref{eqq63}$ and $\eqref{eqq63bis}$ we reduce our objective from $\eqref{toproveinlemma}$ to the proof of
\begin{equation}
\label{toproveinlemmabis}
\forall \delta>0, \ M_\delta:=\max_{(t,\mathbf{x})\in [0,T)\times \mathcal{O}_\delta} (U-V_\delta)(t,\mathbf{x})\le 0.
\end{equation}
To prove the above statement, we assume by contradiction that $M_\delta>0$. On the bounded set $\mathcal{O}_\delta$, functions $\mu$ and $\sigma$ are uniformly Lipschitz and $\mathcal{G}_2$ is uniformly continuous. Then, by following Pham~\cite[proof of Theorem 4.4.5]{Pham}, we get that for any $\delta>0$, $\beta M_\delta\le 0$, which is a contradiction. We conclude that for any $\delta>0$, $M_\delta\le 0$ and so both $\eqref{toproveinlemmabis}$ and $\eqref{toproveinlemma}$ hold true. By taking limit of $\delta$ going to $0$ in $\eqref{toproveinlemma}$, we get that $(U-V)(\cdot,\cdot)\le 0$ in $[0,T]\times \mathcal{O}$, which concludes the proof.
\end{proof}

We now prove Theorem \ref{thmviscsolgen1}. By analysing value function $v$, we get the following upper and lower bounds. Using boundedness of process $(Q_r)_{r\in[t,T]}$ and $\eqref{eqqGBMbound}$, there exists $C>0$ such that for any $(t,\x)\in[0,T]\times \mathcal{O}$,
\begin{align*}
v(t,\x)&\le \sup_{c\in\mathcal{A}}\E_t[M_\tau]+\sup_{c\in\mathcal{A}}\E_t[Q_\tau S_\tau]-\inf_{c\in\mathcal{A}}\E_t\left[\chi Q_\tau^2+\phi_1 \int_t^\tau Q_r^2 \;dr+\phi_2 \int_t^\tau S_r Q_r \;dr+\phi_3 \int_t^\tau A_r Q_r \;dr\right]\\
&\le \sup_{c\in\mathcal{A}}\E_t\left[\int_t^\tau c_r (S_r-\eta c_r)\; dr\right]+q\E_t\left[\sup_{r\in[t,T]} S_r\right]\le \frac T{4\eta}\E_t\left[\sup_{r\in[t,T]} S_r^2\right]+q\E_t\left[\sup_{r\in[t,T]} S_r\right]\\
&\le C (a+a^2) e^{C|\epsilon|}.
\end{align*}
On the other hand, by choosing $c\equiv 0$, there exists $C>0$ such that for any $(t,\x)\in[0,T]\times \mathcal{O}$
\begin{align*}
v(t,\x)&\ge v^0(t,\x)= \E_t[q S_T]-\E_t\left[\chi q^2+\phi_1 \int_t^T q^2 \;dr+\phi_2 \int_t^T S_r q\;dr+\phi_3 \int_t^T A_r q\;dr\right]\\
&\ge -\E_t\left[\chi q^2+\phi_1 Tq^2+\phi_2 Tq \sup_{r\in[t,T]} S_r+\phi_3T q \sup_{r\in[t,T]} A_r\right]\\
&\ge - C(1+a)\left(1+ e^{C|\epsilon|}\right).
\end{align*}
Here in the last inequality we have used $\eqref{eqqGBMbound}$. All conditions in \cite[Propositions 4.3.1 and 4.3.2]{Pham} are satisfied. In particular, \cite[Condition (3.5)]{Pham} holds true in $\eqref{eqq3.5gen}$ and $v$ is locally vounded as proved in upper and lower bounds above. Then, by applying Pham~\cite[Propositions 4.3.1 and 4.3.2]{Pham}, we prove that the value function $v$ is a viscosity solution to the HJB equation $\eqref{1eqq3tris3}$. \\
Using the above upper and lower bounds we get that $v$ satisfies the growth condition $\eqref{growthcond2gen}$. Then, using Comparison Principle Lemma \ref{thmcompgen}, we conclude that value function $v$ is the unique viscosity solution of HJB equation $\eqref{1eqq3tris3}$. \qed

\subsection{Proof of Proposition \ref{propAtilde}}
To prove the result, we first give one technical lemma.
\begin{lemma}
\label{lemmalocbound}
Let $\gamma>0$ be fixed and let $\tilde{\mathcal{A}}_{\gamma}$, defined in $\eqref{defAtilde}$, be the set of admissible controls. Then, the value function $v$, defined in $\eqref{1valfunc}$, has the following property:
\begin{align}
\label{boundloc}
|v(t,\x)-v(t,\x')|\le C\left(|a-a'|+\left|e^{\epsilon-\epsilon'}-1\right|+|q-q'|^{\frac{\gamma}{\gamma+1}}\right)\left(1+a+a'\right)^C\left(1+e^{C|\epsilon|}+e^{C|\epsilon'|}\right)\left(1+q+q'\right)^C
\end{align}
for any $t\in[0,T]$ and $\x,\x'\in \mathcal{O}$, where $C>0$ is a constant independent of $t$.
\end{lemma}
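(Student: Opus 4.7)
The plan is to estimate $|v^c(t,\x) - v^c(t,\x')|$ for a single control $c$ used with both initial data, and then conclude by a $\delta$-optimal control argument. The key structural fact is that under a shared control $c$ the processes $A^{t,\x},\varepsilon^{t,\x}$ depend only on initial data, and $Q^{t,\x}_r - Q^{t,\x'}_r \equiv q - q'$; so, WLOG taking $q \geq q'$, one has $\tau^\x \geq \tau^{\x'}$ pathwise. I decompose
$$v^c(t,\x) - v^c(t,\x') = \E_t\Bigl[\int_t^{\tau^\x}\mathcal{G}_1\,dr - \int_t^{\tau^{\x'}}\mathcal{G}_1\,dr\Bigr] + (\text{same for } \mathcal{G}_2) + \E_t\bigl[\mathcal{S}(\X^{t,\x}_{\tau^\x}) - \mathcal{S}(\X^{t,\x'}_{\tau^{\x'}})\bigr],$$
and further split each term as a common-range part over $[t,\tau^{\x'}]$ plus a tail part over $[\tau^{\x'},\tau^\x]$.

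On the common range, $\mathcal{G}_1$ does not depend on $Q$, so its integrand difference reduces to $(S^{t,\x}_r - S^{t,\x'}_r)c_r$, which by Cauchy--Schwarz together with Lemma \ref{lemmaboundsupS} and the $L^2$ control of $c$ implicit in $\tilde{\mathcal{A}}_{\gamma,N}$ produces bounds in $|a-a'|$ and $|e^{\epsilon-\epsilon'}-1|$ of the desired form. The $\mathcal{G}_2$ difference splits analogously, with the additional linear-in-$q$ pieces contributing a factor $|q-q'|$ (stronger than the claimed $|q-q'|^{\gamma/(\gamma+1)}$). The terminal $\mathcal{S}$ term requires a short case analysis on whether each of $\tau^\x,\tau^{\x'}$ equals $T$, but since $\mathcal{S}$ vanishes at $Q=0$, all cases reduce to an $S$-difference estimate plus a $|q-q'|$ contribution. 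The hard part is the tail range, since $\mathcal{G}_1 = Sc - \eta c^2$ contains a quadratic impact term: the linear piece is harmless because $\int_{\tau^{\x'}}^{\tau^\x} c_r\,dr = q-q'$ exactly, giving a bound $(q-q')\cdot\E_t[\sup_r S_r]$, but for the quadratic piece one invokes the pathwise Lyapunov interpolation with $p_1=1,\;p_2=2,\;p_3=2+\gamma$, namely
$$\int_{\tau^{\x'}}^{\tau^\x} c_r^2\,dr \;\leq\; \Bigl(\int_{\tau^{\x'}}^{\tau^\x} c_r\,dr\Bigr)^{\gamma/(\gamma+1)} \Bigl(\int_{\tau^{\x'}}^{\tau^\x} c_r^{2+\gamma}\,dr\Bigr)^{1/(\gamma+1)} = (q-q')^{\gamma/(\gamma+1)} \Bigl(\int_{\tau^{\x'}}^{\tau^\x} c_r^{2+\gamma}\,dr\Bigr)^{1/(\gamma+1)},$$
and then, after taking expectation, Jensen's inequality (using concavity of $x\mapsto x^{1/(1+\gamma)}$) together with the admissibility bound $(\E_t[\int_t^T c_r^{2+\gamma}\,dr])^{1/(2+\gamma)}\le N(1+a)(1+e^{N\epsilon})$ from \eqref{defAtilde} yields the factor $|q-q'|^{\gamma/(\gamma+1)}$ multiplied by the advertised polynomial--exponential factor in $(a,\epsilon)$.

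To pass from $v^c$ to $v$, for every $\delta>0$ one picks $c^\delta \in \tilde{\mathcal{A}}_{\gamma,N}(t,\x)$ with $v^{c^\delta}(t,\x) \geq v(t,\x) - \delta$; since the moment constraint in \eqref{defAtilde} depends only on $(a,\epsilon)$, $c^\delta$ remains admissible for $\x'$ up to a constant depending on $(a,a',\epsilon,\epsilon')$ which can be absorbed into the exponential--polynomial factors on the right of \eqref{boundloc}. Exchanging $\x,\x'$ and letting $\delta \to 0$ yields the two-sided bound. The main obstacle is precisely the treatment of the quadratic impact term on the random interval $[\tau^{\x'},\tau^\x]$, out of which the sublinear Hölder exponent $\gamma/(\gamma+1)$ emerges naturally from the interpolation; the remainder of the argument is careful bookkeeping of initial-condition continuity estimates via Lemmas \ref{lemmaonboundexpeps} and \ref{lemmaboundsupS}.
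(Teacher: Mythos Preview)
Your proposal is correct and follows essentially the same route as the paper: WLOG $q\ge q'$ gives $\tau^{\x}\ge\tau^{\x'}$ pathwise, the objective is split into a common range $[t,\tau^{\x'}]$ and a tail $[\tau^{\x'},\tau^{\x}]$, the $S$-dependent pieces are controlled via Lemma~\ref{lemmaboundsupS}, and the crucial $\eta\int_{\tau^{\x'}}^{\tau^{\x}} c_r^2\,dr$ term is handled by the same H\"older interpolation between $\int c_r\,dr=q-q'$ and $\int c_r^{2+\gamma}\,dr$, producing the exponent $\gamma/(\gamma+1)$. The only cosmetic difference is that the paper applies H\"older directly on the product measure $\Pro\otimes dr$, whereas you interpolate pathwise and then apply Jensen, and that the paper starts from $|v(t,\x)-v(t,\x')|\le\sup_{c}\E_t[\,|\cdots|\,]$ rather than via a $\delta$-optimal control; both variants are equivalent here.
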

\begin{proof}
Let $t\in[0,T]$ be fixed and $\x, \x'\in \mathcal{O}$. We assume w.l.o.g. that $q\ge q'$. Denote $\X_r^{t,\x}$ and $\X_r^{t,\x'}$ the two solutions to $\eqref{SDEdef}$ with initial conditions $(t,\x)$ and $(t,\x')$ respectively and the stopping times $\tau=T\wedge \min\{r\ge t\; | \; Q_r^{t,\x}=0\}$ and $\tau'=T\wedge \min\{r\ge t\; | \; Q_r^{t,\x'}=0\}$. We observe that
\begin{align}
\nonumber
|v(t,\x)-v(t,\x')|&\le \sup_{c\in \tilde{\mathcal{A}}_{\gamma}(t,\x)\cap \tilde{\mathcal{A}}_{\gamma}(t,\x')} \mathbb{E}_t\Bigg[ \left|M_\tau^{t,\x}-M_{\tau'}^{t,\x'}\right|+ \left|Q_\tau^{t,\x}\left(S_\tau^{t,\x}-\chi Q_\tau^{t,\x} \right)-Q_{\tau'}^{t,\x'}\left(S_{\tau'}^{t,\x'}-\chi Q_{\tau'}^{t,\x'} \right)\right|\\
\label{eqqd1}
&\qquad\qquad\qquad+\left| \int_t^\tau \mathcal{G}_2(\X_r^{t,\x}) \;dr-\int_t^{\tau'} \mathcal{G}_2(\X_r^{t,\x'}) \;dr \right|\Bigg].
\end{align}
We fix a control $c\in \tilde{\mathcal{A}}_{\gamma}(t,\x)\cap \tilde{\mathcal{A}}_{\gamma}(t,\x')$. We observe that $\tau\ge \tau'$ $\Pro$-a.s., since $Q_r^{t,\x}\ge Q_r^{t,\x'}$ $\Pro$-a.s. for any $r\ge t$, by the assumption $q\ge q'$. Recall that
\begin{equation}
\label{boundQind}
\forall \omega \in \{\tau'<T\}, \ \forall r\ge \tau' (\omega), \quad Q_r^{t,\x}(\omega)\le q-q',
\end{equation}
we get that
\begin{align}
\nonumber
\E_t\left[|Q_{\tau}^{t,\x}-Q_{\tau'}^{t,\x'}|\right]&\le\E_t\left[\mathds{1}_{\tau<T, \tau'<T}\cdot 0+ \mathds{1}_{\tau'<\tau=T}Q_{T}^{t,\x}+ \mathds{1}_{\tau=\tau'=T}\left|Q_{T}^{t,\x}-Q_{T}^{t,\x'}\right|\right]\\
\label{eqqd2}
&=\E_t\left[\mathds{1}_{\tau'<\tau=T}\right]|q-q'|+\E_t\left[\mathds{1}_{\tau=\tau'=T}\right]|q-q'|\le 2|q-q'|.
\end{align}
Using uniformly boundedness of $\E_t\left[S_{T}^{t,\x}\right]$ with respect to $t$, obtained by $\eqref{eqqGBMbound}$, we get that there exists $C>0$ independent of $t$ and of control $c$ such that 
\begin{align}
\nonumber
&\E_t\left[\left|S_{\tau}^{t,\x}Q_{\tau}^{t,\x}-S_{\tau'}^{t,\x'}Q_{\tau'}^{t,\x'}\right|\right]\le\E_t\bigg[\mathds{1}_{\{\tau'=\tau=T\}}\bigg(S_{T}^{t,\x}\underbrace{\bigg|Q_{T}^{t,\x}-Q_{T}^{t,\x'}\bigg|}_{=|q-q'|}+\underbrace{Q_{T}^{t,\x'}}_{\le q'}\bigg|S_{T}^{t,\x}-S_{T}^{t,\x'}\bigg| \bigg)\bigg]\\
\nonumber
&\qquad\qquad +\E_t\bigg[\mathds{1}_{\{\tau'<T, \tau<T\}}\bigg|S_{\tau}^{t,\x}\underbrace{Q_{\tau}^{t,\x}}_{=0}-S_{\tau'}^{t,\x'}\underbrace{Q_{\tau'}^{t,\x'}}_{=0}\bigg|\bigg]+\E_t\bigg[\mathds{1}_{\{\tau'<T=\tau\}}\bigg|S_{T}^{t,\x}\underbrace{Q_{T}^{t,\x}}_{\le q-q'}-S_{\tau'}^{t,\x'}\underbrace{Q_{\tau'}^{t,\x'}}_{=0}\bigg|\bigg]\\
\label{eqqd5}
&\qquad\le 2|q-q'|\E_t\left[S_{T}^{t,\x}\right] + \E_t\left[\left|S_{T}^{t,\x}-S_{T}^{t,\x'}\right|\right]q'\le C|q-q'|ae^{|\epsilon|}+ \E_t\left[\left|S_{T}^{t,\x}-S_{T}^{t,\x'}\right|\right]q'.
\end{align}
Using $\eqref{boundSexp}$ and merging $\eqref{eqqd2}$ and $\eqref{eqqd5}$ we get that there exists $C>0$ independent of $t$ and of control $c$ such that 
\begin{align}
\nonumber
\mathbb{E}_t&\left[\left|Q_\tau^{t,\x}\left(S_\tau^{t,\x}-\chi Q_\tau^{t,\x} \right)-Q_{\tau'}^{t,\x'}\left(S_{\tau'}^{t,\x'}-\chi Q_{\tau'}^{t,\x'} \right)\right|\right]\\
\nonumber
&\qquad\le\chi\mathbb{E}_t\left[\left|Q_\tau^{t,\x}- Q_{\tau'}^{t',\x} \right|\cdot \left|Q_\tau^{t,\x}+ Q_{\tau'}^{t',\x} \right|\right]+\E_t\left[\left|S_{\tau}^{t,\x}Q_{\tau}^{t,\x}-S_{\tau'}^{t,\x'}Q_{\tau'}^{t,\x'}\right|\right]\\
\label{eqqd11}
&\qquad\le 2\chi q|q-q'|+C|q-q'|ae^{|\epsilon|}+Cq'e^{|\epsilon'|}\left(|a-a'|+a\left|e^{\epsilon-\epsilon'}-1\right|\right).
\end{align}
Similarly to $\eqref{eqqd5}$, we get that there exists $C>0$ independent of $t$ and of control $c$ such that 
\begin{align}
\nonumber
\E_t&\left[\left|\int_t^\tau S_{r}^{t,\x}Q_{r}^{t,\x}\; dr-\int_t^{\tau'} S_{r}^{t,\x'}Q_{r}^{t,\x'}\; dr\right|\right]\\
\nonumber
&\qquad\le \E_t\left[\int_t^{\tau'} \left|S_{r}^{t,\x}Q_{r}^{t,\x}-S_{r}^{t,\x'}Q_{r}^{t,\x'}\right|\; dr\right]+\E_t\left[\mathds{1}_{\{\tau'=T\}}\cdot 0\right]+\E_t\bigg[\mathds{1}_{\{\tau'<T\}}\int_{\tau'}^\tau S_{r}^{t,\x}\underbrace{Q_{r}^{t,\x}}_{\le q-q'}\; dr\bigg]\\
\label{eqqd6}
&\qquad\le C\left(\E_t\left[\sup_{r\in[t,T]}\left|S_{r}^{t,\x}-S_{r}^{t,\x'}\right|\right]q'+|q-q'|ae^{|\epsilon|}\right)
\end{align}
and that
\begin{align}
\label{eqqd9}
\E_t&\left[\left|\int_t^\tau A_{r}^{t,\x}Q_{r}^{t,\x}\; dr-\int_t^{\tau'} A_{r}^{t,\x'}Q_{r}^{t,\x'}\; dr\right|\right]\le C\left(\E_t\left[\sup_{r\in[t,T]}\left|A_{r}^{t,\x}-A_{r}^{t,\x'}\right|\right]q'+|q-q'|a\right).
\end{align}
Using boundedness of $Q_s$ and $\eqref{boundQind}$, we get
\begin{align}
\nonumber
\E_t&\left[\left|\int_t^\tau \left(Q_{r}^{t,\x}\right)^2\; dr-\int_t^{\tau'} \left(Q_{r}^{t,\x'}\right)^2\; dr\right|\right]\le 2q\E_t\bigg[\bigg|\int_t^{\tau'} \underbrace{\left(Q_{r}^{t,\x}-Q_{r}^{t,\x'}\right)}_{=q-q'}\; dr\bigg|\bigg]+\E_t\left[\int_{\tau'}^\tau \left(Q_{r}^{t,\x}\right)^2\; dr\right]\\
\label{eqqd3}
&\qquad\le 2qT|q-q'|+\E_t\left[\mathds{1}_{\{\tau'=T\}}\cdot 0\right]+T\E_t\left[\mathds{1}_{\{\tau'<T\}}|q-q'|^2\right]\le 2T\left(q|q-q'|+|q-q'|^2\right).
\end{align}
Merging $\eqref{eqqd6}$, $\eqref{eqqd9}$ and $\eqref{eqqd3}$ and applying $\eqref{boundSexp}$ and $\eqref{boundonlipsA}$, we conclude that there exists $C>0$ independent of $t$ and of control $c$ such that 
\begin{align}
\label{eqqd13}
\mathbb{E}_t&\left[\left| \int_t^\tau \mathcal{G}_2(\X_r^{t,\x}) \;dr-\int_t^{\tau'} \mathcal{G}_2(\X_r^{t,\x'}) \;dr \right|\right]\\
\nonumber
&\qquad\le C\left( q|q-q'|+|q-q'|^2+q'\left(e^{|\epsilon'|}+1\right)\left(|a-a'|+a\left|e^{\epsilon-\epsilon'}-1\right|\right) +|q-q'|a\left(e^{|\epsilon|}+1\right)\right).
\end{align}
Finally,
\begin{align*}
\E_t\left[\left|M_{\tau}^{t,\x}-M_{\tau'}^{t,\x'}\right|\right]&\le \E_t\left[\int_t^{\tau'} \left|c_r\left(S_{r}^{t,\x}-S_{r}^{t,\x'}\right)\right|\; dr\right]+\E_t\left[\int_{\tau'}^\tau \left|c_r\left(S_{r}^{t,\x}-\eta c_r\right)\right|\; dr\right]\\
&\le\left(\E_t\left[\left(\int_t^{\tau'} c_r\; dr\right)^2\right]\right)^{\frac12}\left(\E_t\left[\sup_{r\in[t,T]}\left|S_{r}^{t,\x}-S_{r}^{t,\x'}\right|^2\right]\right)^{\frac12}\\
&\qquad+\underbrace{\left(\E_t\left[\left(\int_{\tau'}^{\tau} c_r\; dr\right)^2\right]\right)^{\frac12}}_{\le |q-q'|}\left(\E_t\left[\sup_{r\in[t,T]}\left|S_{r}^{t,\x}\right|^2\right]\right)^{\frac12}+\eta \E_t\left[\int_{\tau'}^{\tau} c_r^2\; dr\right].
\end{align*}
By using Holder's inequality with parameters $\frac {\gamma+1}\gamma$ and $1+\gamma$, we get
\begin{align*}
\E_t\left[\int_{\tau'}^{\tau} c_r^2\; dr\right]&\le \left(\E_t\left[\int_{\tau'}^\tau \left(c_r^{\frac{\gamma}{\gamma+1}}\right)^{\frac {\gamma+1}\gamma} dr\right]\right)^{\frac{\gamma}{\gamma+1}}\left(\E_t\left[\int_{\tau'}^\tau \left(c_r^{\frac{\gamma+2}{\gamma+1}}\right)^{1+\gamma}\; dr\right]\right)^{\frac{1}{1+\gamma}}\\
&\le \left(\underbrace{\E_t\left[\int_{\tau'}^\tau c_r \;dr\right]}_{\le |q-q'|}\right)^{\frac{\gamma}{\gamma+1}}\left(\underbrace{\E_t\left[\int_t^T c_r^{\gamma+2}\; dr\right]}_{\le N(1+a)^{2+\gamma}\left(1+e^{N(2+\gamma)\epsilon}\right)}\right)^{\frac{1}{1+\gamma}}.
\end{align*}
Hence, using $L^{2+\gamma}$ boundedness of process $(c_r)_{r\in [t,T]}$, for any $(c_r)_{r\in [t,T]}\in\tilde{\mathcal{A}}_{\gamma}(t,\x)\cap \tilde{\mathcal{A}}_{\gamma}(t,\x'),$
\begin{align}
\label{eqqd7}
\E_t\left[\left|M_{\tau}^{t,\x}-M_{\tau'}^{t,\x'}\right|\right]&\le q'\left(\E_t\left[\sup_{r\in[t,T]}\left|S_{r}^{t,\x}-S_{r}^{t,\x'}\right|^2\right]\right)^{\frac12}+|q-q'|\left(\E_t\left[\sup_{r\in[t,T]}\left(S_{r}^{t,\x}\right)^2\right]\right)^{\frac12}\\
\nonumber
&\qquad+\eta N^{\frac{1}{1+\gamma}}|q-q'|^{\frac{\gamma}{\gamma+1}} (1+a)^{\frac{2+\gamma}{1+\gamma}}\left(1+e^{N(2+\gamma)\epsilon}\right)^{\frac{1}{1+\gamma}}.
\end{align}
All previous inequality can also be obtained when $q\le q'$. By merging inequalities $\eqref{eqqd11}$, $\eqref{eqqd13}$ and $\eqref{eqqd7}$ into $\eqref{eqqd1}$ and using arbitrariness of control $c$ and $\eqref{boundSexp}$, we have proved $\eqref{boundloc}$.
\end{proof}

Continuity of value function $v$ is proved using Lemma \ref{lemmalocbound}. Let $(t',\x')\in[0,T]\times \mathcal{O}$ be fixed. We assume w.l.o.g. that $t\le t'$. We observe that
\begin{equation}
\label{eqqlimb}
|v(t,\x)-v(t',\x')|\le |v(t,\x)-v(t,\x')|+|v(t,\x')-v(t',\x')|.
\end{equation}
However, $|v(t,\x)-v(t,\x')|\to 0$ uniformly on $t$ for $\x\to\x'$ as stated in Lemma \ref{lemmalocbound}. If we apply Dynamic Programming Principle \cite[Remark 3.3.3]{Pham}, we get that for any $\delta>0$ there exists $c\in\tilde{\mathcal{A}}_{\gamma}(t,\x')$ such that
\begin{equation*}
|v(t,\x')-v(t',\x')|-\delta\le\E_t\left[ \left|\int_t^{t'}\mathcal{G}_2(\X_r^{t,\x'})\;dr\right| +\left|v(t',\X_{t'}^{t,\x'})-v(t',\x')\right|\right].
\end{equation*}
Using boundedness of $Q_r$ and $\eqref{eqqGBMbound}$, it is easy to show that there exists $C$ such that
$$\E_t\left[ \left|\int_t^{t'}\mathcal{G}_2(\X_r^{t,\x'})\;dr\right| \right]\le C|t-t'|^{\frac 12}.$$
By using Lemma \ref{lemmalocbound}, boundedness of $Q_r$, $L^p$-integrability of $A_r$ and $e^{\varepsilon_r}$ for any $p\ge 1$ and Holder inequality, we get that there exists $C>0$ independent of $t$ and $\delta$ such that
\begin{align}
\label{eqq49}
|v(t,\x')-v(t',\x')|\le \delta+C\Bigg(|t'-t|^{\frac12}&+\left(\E_t\left[|A_{t'}^{t,\x'}-a'|^{2}\right]\right)^{\frac 12}\\
\nonumber
&+\left(\E_t\left[\left|e^{\varepsilon_{t'}^{t,\x'}-\epsilon'}-1\right|^{2}\right]\right)^{\frac 12}+\left(\E_t\left[|Q_{t'}^{t,\x'}-q'|^{2\frac{\gamma}{\gamma+1}}\right]\right)^{\frac 12}\Bigg).
\end{align}
We observe that using Holder inequality with coefficients $2+\gamma$ and $\frac{2+\gamma}{1+\gamma}$
\begin{align}
\nonumber
\E_t\left[|Q_{t'}^{t,\x'}-q'|^{\frac{2\gamma}{\gamma+1}}\right]&\le\E_t\left[\left(\int_t^{t'}c_r\; dr\right)^{\frac{2\gamma}{\gamma+1}}\right]\le |t-t'|^{\frac{2\gamma}{2+\gamma}}\E_t\left[\left(\int_0^T c_r^{2+\gamma}\; dr\right)^{\frac {2\gamma}{(2+\gamma)(1+\gamma)}}\right]\\
\label{eqq50}
&\le |t-t'|^{\frac{2\gamma}{2+\gamma}}\left(\E_t\left[\int_0^T c_r^{2+\gamma}\; dr\right]\right)^{\frac {2\gamma}{(2+\gamma)(1+\gamma)}}.
\end{align}
Here in the last inequality we used Jensen's inequality for $\frac {2\gamma}{(2+\gamma)(1+\gamma)}\le 1$. Reminding that $c\in\tilde{\mathcal{A}}_{\gamma}(t,\x')$ and applying $\eqref{eqq3.5gen}$, $\eqref{eqqboundexp2}$, $\eqref{eqqGBMbound}$ and $\eqref{eqq50}$ to $\eqref{eqq49}$, we get that, uniformly on $c\in\tilde{\mathcal{A}}_{\gamma}(t,\x')$
\begin{align*}
\lim_{t\to (t')^-}|v(t,\x')-v(t',\x')|\le \delta.
\end{align*}
From arbitrariness of $\delta$ we conclude that previous limit converges to $0$. Continuity of $v$ follows from $\eqref{eqqlimb}$, by sending $(t,\x)\to(t',\x')$. The same results can be obtained when $t\ge t'$.

\subsection{Proof of Proposition \ref{propassu}}
Define for any $(t,\epsilon)\in[0,T]\times \R$, $g^*(t,\epsilon):= e^{\epsilon}\left( 1-g_2(t,\epsilon )\right)$. Condition $\eqref{condong2}$ is equivalent to proving that $g^*(t,\epsilon)\ge 0$ for any $(t,\epsilon)\in[0,T]\times \R$. A simple calculus on second PDE in $\eqref{HJBsplitted}$ shows that function $g^*$ satisfies the following PDE:
\begin{equation*}
\frac{\partial g^*}{\partial t}+\frac{\sigma_2^2}2 \frac{\partial^2 g^*}{\partial \epsilon^2}+\left(\rho\sigma_1\sigma_2-k\epsilon\right) \frac{\partial g^*}{\partial \epsilon}+\left(\frac{g_3(t)}{\eta} +\mu_1\right)g^*+e^{\epsilon}\left( k\epsilon-\frac{\sigma_2^2}2-\rho\sigma_1\sigma_2-\mu_1+\phi_2 \right)+\phi_3=0
\end{equation*}
on $[0,T)\times \R$, with terminal condition $g^*(T,\epsilon)=e^{ \epsilon}(1-g_2(T,\epsilon))=0$.

We check that conditions of Feynman-Kac Theorem are fulfilled for function $g^*$. As we have proved in $\eqref{g3}$, $g_3$ is twice differentiable and bounded from above and function $e^{\epsilon}\left( k\epsilon-\frac{\sigma_2^2}2-\rho\sigma_1\sigma_2-\mu_1+\phi_2 \right)$ is linearly exponential on variable $\epsilon$. Hence, we get the following Feynmann-Kac representation for $g^*$:
\begin{align}
\label{eqq22}
g^*(t,\epsilon)&=\mathbb{E}_t\left[ \int_t^T \exp\left( \int_t^r \left(\frac{g_3(s)}{\eta}+\mu_1\right)\; ds \right)\left(e^{\tilde{\varepsilon}_r}\left( k\tilde{\varepsilon}_r-\frac{ \sigma_2^2}2-\rho\sigma_1\sigma_2-\mu_1 +\phi_2\right)+\phi_3 \right)\; dr\right],
\end{align}
where $\tilde{\varepsilon}_r$ is the solution to the following SDE:
\begin{equation*}
d\tilde{\varepsilon}_r=(\rho\sigma_1\sigma_2-k\tilde{\varepsilon}_r)dr+\sigma_2dW_r, \quad\tilde{\varepsilon}_t=\epsilon.
\end{equation*}
$(\tilde{\varepsilon}_r)_{0\le r\le T}$ is an OU process and for any fixed $r\in[0,T]$, $\tilde{\varepsilon}_r$ is a normal distributed random variable with first two moments equal to
\begin{align*}
\mathbb{E}_t\left[\tilde{\varepsilon}_r\right]&=\epsilon e^{-k(r-t)}+\frac{\rho}{k}\sigma_1\sigma_2(1-e^{-k(r-t)})=\epsilon\bar{\mu}(r-t)+\frac{\rho}{k}\sigma_1\sigma_2\left(1-\bar{\mu}(r-t)\right),\\
\Var_t\left( \tilde{\varepsilon}_r \right) &=\frac{\sigma_2^2}{2k}\left(1-e^{-2k(r-t)}\right)=\bar{\sigma}(r-t)^2.
\end{align*}
Calculus on normally and log-normally distributed random variables gives
\begin{align}
\label{eqq27}
\mathbb{E}_t&\Bigg[e^{\tilde{\varepsilon}_r}\left( k\tilde{\varepsilon}_r-\frac{ \sigma_2^2}2-\rho\sigma_1\sigma_2-\mu_1+\phi_2 \right) \Bigg] \\
\nonumber
\qquad&= \left(k \bar{\mu}(r-t) \epsilon+k \bar{\sigma}(r-t)^2-\frac{\sigma_2^2}{2}-\rho\sigma_1\sigma_2\bar{\mu}(r-t)-\mu_1+\phi_2\right)e^{\bar{\mu}(r-t) \epsilon+\frac{ \bar{\sigma}(r-t)^2}2+\frac{\rho}{k}\sigma_1\sigma_2(1-\bar{\mu}(r-t))}.
\end{align}
By applying $\eqref{eqq27}$ to $\eqref{eqq22}$ we get result in $\eqref{eqq25}$.

We now prove that integral in $\eqref{eqq25}$ is nonnegative. From definition of $\bar{\mu}(r)$ and $\bar{\sigma}(r)^2$ in $\eqref{eqqmu}$ we have
\begin{equation*}
\frac{\partial \bar{\mu}(r)}{\partial r}=-k\bar{\mu}(r), \quad \frac{\partial\bar{\sigma}(r)^2}{\partial r}=-2k\bar{\sigma}(r)^2+\sigma_2^2.
\end{equation*}
$g^*$ can be written as
\begin{equation}
\label{rewritegstar}
g^*(t, \epsilon)=\int_t^T \hat{g}(r;t) f(r,\epsilon;t) \; dr,
\end{equation}
where $f$ is defined as
\begin{align*}
f(r,\epsilon;t):&=e^{\bar{\mu}(r-t)\epsilon+\frac{\bar{\sigma}(r-t)^2}{2}+\frac{\rho}{k}\sigma_1\sigma_2(1-\bar{\mu}(r-t))}\left(k\bar{\mu}(r-t)\epsilon+k\bar{\sigma}(r-t)^2-\frac{\sigma_2^2}{2}-\rho\sigma_1\sigma_2\bar{\mu}(r-t)-\mu_1+\phi_2\right)+\phi_3.
\end{align*}
To prove that $g^*(t, \epsilon)\ge 0$ for any $(t,\epsilon)\in[0,T]\times \R$, we show that, under condition $\eqref{eqq51}$, $f$ is nonnegative. Observing that
\begin{align*}
\partial_\epsilon f(r,\epsilon;t) &= e^{\bar{\mu}(r-t) \epsilon+\frac{\bar{\sigma}(r-t)^2}{2}+\frac{\rho}{k}\sigma_1\sigma_2(1-\bar{\mu}(r-t))}\bar{\mu}(r-t)\cdot\\
&\qquad\qquad\cdot\left(k\bar{\mu}(r-t) \epsilon+k\bar{\sigma}(r-t)^2-\frac{\sigma_2^2}{2}-\rho\sigma_1\sigma_2\bar{\mu}(r-t)-\mu_1+\phi_2+k \right),
\end{align*}
we get that for any $0\le t\le r\le T$, the minimum point $\epsilon^*(r;t)$ of $f(r,\epsilon;t)$ satisfies the following equation
\begin{align*}
\bar{\mu}(r-t)\epsilon^*(r;t)+\bar{\sigma}(r-t)^2-\frac{\sigma_2^2}{2k}-\frac{\rho}{k}\sigma_1\sigma_2\bar{\mu}(r-t)-\frac{\mu_1}{k}+\frac{\phi_2}{k}+1=0.
\end{align*}
By evaluating $f(r,\epsilon;t)$ in $\epsilon^*(r;t)$,
\begin{equation*}
f(r,\epsilon^*(r;t);t)= -ke^{-\frac{\bar{\sigma}(r-t)^2}{2}+\frac{ \sigma_2^2}{2k}-1+\frac{\mu_1}{k}+\frac{\rho}{k}\sigma_1\sigma_2-\frac{\phi_2}{k}}+\phi_3.
\end{equation*}
If condition $\eqref{eqq51}$ is satisfied, then $f(r,\epsilon^*(r;t);t)\ge 0$ and from $\eqref{rewritegstar}$ we conclude that $g^*$ is nonnegative. \qed

\subsection{Proof of Proposition \ref{propcstar}}
Using boundedness of $Q_r$ and $g_3$, linearity of $g_2$ in $a$ and linear exponential growth of $g_2$ with respect to $\epsilon$, we conclude there exist $C>0$ such that
\begin{align}
\label{1eqqq22}
|c^*(r,\X_r)|&\le C (1+q)(1+A_{r})(1+e^{C\varepsilon_r}).
\end{align} 
Applying Holder inequality, we get there exists $C_1>0$ such that
\begin{align*}
&\E_t\left[\int_0^T |c^*(r,\X_r)|^{2+\gamma} dr\right]\le C_1\left(1+q\right)^{2+\gamma}\left(1+\E_t\left[\sup_{r\in[0,T]}A_{r}^{2(2+\gamma)}\right]\right)^{\frac 1{2}}\left(1+\E_t\left[\sup_{r\in[0,T]}e^{2C(2+\gamma) \varepsilon_{r}}\right]\right)^{\frac 1{2}}.
\end{align*}
Using  $\eqref{eqqGBMbound}$, we conclude that there exists $C_2>0$, independent of $t$ and $\x$, such that
\begin{equation*}
\left(\E_t\left[\int_t^T |c^*(r,\X_r)|^{2+\gamma} dr\right]\right)^{\frac 1{2+\gamma}}\le C_2 \left( 1+a\right)\left( 1+e^{C_2|\epsilon|}\right) , 
\end{equation*}
which implies $(c^*(r,\X_r))_{r\in [t,T]}\in \tilde{\mathcal{A}}_{\gamma,C_2}(t,\x)$. \qed

\end{document}